\documentclass[preprint]{imsart}
\RequirePackage[OT1]{fontenc}
\RequirePackage{amsthm,amsmath}
\RequirePackage[numbers]{natbib}
\RequirePackage[colorlinks,citecolor=blue,urlcolor=blue]{hyperref}
\usepackage{amsfonts}
\usepackage{amssymb}
\usepackage[latin1]{inputenc}

% settings
%\pubyear{2005}
%\volume{0}
%\issue{0}
%\firstpage{1}
%\lastpage{8}
%\arxiv{arXiv:0000.0000}

\setcounter{equation}{0}

\startlocaldefs
%\numberwithin{equation}[]
\theoremstyle{plain}

\newtheorem{prop}{Proposition}[]
\newtheorem{coro}{Corollary}[]

\theoremstyle{remark}
\newtheorem{rema}{Remark}[]

\theoremstyle{definition}

\newtheorem{defn}{Definition}[]
\endlocaldefs

\begin{document}

\begin{frontmatter}
\title{\Large Marginals of multivariate Gibbs distributions with applications in Bayesian species sampling\thanksref{}}
\runtitle{Marginals of multivariate Gibbs distributions}
%\thankstext{T1}{Research partially supported }

\begin{aug}
\author{\fnms{\rm Annalisa} \snm{\rm Cerquetti}\thanksref{t1}\ead[label=e1]{annalisa.cerquetti@gmail.com}}
%\author{\fnms{Second} \snm{Author}\thanksref{t3,m1,m2}\ead[label=e2]{second@somewhere.com}}
%\and
%\author{\fnms{Third} \snm{Author}\thanksref{t1,m2}
%\ead[label=e3]{third@somewhere.com}
\ead[label=u1,url]{http://www.foo.com}

\thankstext{t1}{Partially supported by grant PRIN MIUR:2008CEFF37 }
%\thankstext{t2}{First supporter of the project}
%\thankstext{t3}{Second supporter of the project}
\runauthor{A. Cerquetti}

\affiliation{Sapienza University of Rome}
%\affiliation{\rm \normal July 22, 2012}

%\affiliation{\rm \normal August 8, 2012}

%\affiliation{}

%\date{}

\address{Dipartimento di Metodi e Modelli per l'Economia, il Territorio e la Finanza \\Università degli Studi di Roma La Sapienza\\Via del Castro Laurenziano, 9
00161 Rome, Italy\\
\printead{e1}\\
\phantom{E-mail: annalisa.cerquetti@gmail.com\ }
%\printead*{e2}
}

\address{\rm  \normalsize August 8, 2012}

%\address{Address of the Third author\\
%Usually a few lines long\\
%Usually a few lines long\\
%\printead{e3}\\
%\printead{u1}}
%\date{OGGI}
\end{aug}

%\bigskip 

\begin{abstract}
Gibbs partition models are the largest class of infinite exchangeable partitions of the positive integers generalizing the product form of the probability function of the two-parameter Poisson-Dirichlet family. Recently those models have been investigated in a Bayesian nonparametric approach to species sampling problems as alternatives to the Dirichlet and the Pitman-Yor process priors.  Here we derive marginals of conditional and unconditional multivariate distributions arising from exchangeable Gibbs partitions to obtain explicit formulas for joint falling factorial moments of corresponding conditional and unconditional Gibbs sampling formulas. Our proofs rely on a known result on factorial moments of sum of non independent indicators. We provide an application to a Bayesian nonparametric estimation of the predictive probability to  observe a species already observed a certain number of times.
\end{abstract}

\begin{keyword}[class=AMS]
\kwd[Primary ]{60G57}
\kwd{62G05}
\kwd[; secondary ]{62F15}
\end{keyword}

\begin{keyword}
\kwd{Exchangeable Gibbs partitions}
\kwd{Falling factorial moments}
\kwd{Multivariate Gibbs distributions}
\kwd{Sampling formulas}
\kwd{Species sampling problems}
\kwd{Two parameter Poisson-Dirichlet model}
\end{keyword}

\end{frontmatter}

\section{Introduction}

Exchangeable {\it Gibbs} partitions  (\citep{gnepit06}) are the largest class of infinite exchangeable partitions of the positive integers generalizing the product form of exchangeable partition probability function (EPPF) of the two parameter $(\alpha, \theta)$ Poisson-Dirichlet partition model (\cite{pit95}, \cite{pityor97}), namely
\begin{equation}
\label{2par}
p_{\alpha, \theta}(n_1, \dots, n_k)= \frac{(\theta +\alpha)_{k-1 \uparrow \alpha}}{(\theta +1)_{n-1}} \prod_{j=1}^{k} (1 -\alpha)_{n_j-1}, 
\end{equation}
for $\alpha \in (0,1)$, $\theta > -\alpha$, $(n_1, \dots, n_k)$ a composition of $n$, $1 \leq k \leq n$ and $(x)_{y \uparrow \alpha}=x(x+\alpha)\cdots(x+(y-1)\alpha)$ generalized rising factorials. Their EPPF is characterized by the {\it Gibbs} product form 
\begin{equation}
\label{EPPFgibbs}
p_{\alpha, V}(n_1, \dots, n_k)= V_{n,k} \prod_{j=1}^{k} (1 -\alpha)_{n_j-1},
\end{equation}
for $\alpha \in (-\infty, 1)$, and $V=(V_{n,k})$ weights satisfying the backward recursive relation $V_{n,k}=(n -k\alpha)V_{n+1, k} + V_{n+1, k+1}$, for $V_{1,1}=1$.  By Theorem 12 in \cite{gnepit06} each element of (\ref{EPPFgibbs}) arises as a probability mixture of extreme partitions, namely: Fisher's (1943) partitions (\cite{fis43}) for $\alpha < 0$, Ewens $(\theta)$ partitions (\cite{ewe72}, \cite{kin75}) for $\alpha=0$, and Poisson-Kingman conditional partitions driven by the stable subordinator (\cite{pit03}) for $\alpha \in (0,1)$.  

By an application of  Eq. (2.6) in \cite{pit06}, given an infinite EPPF in the form (\ref{EPPFgibbs}), for each $n \geq 1$  the corresponding joint distribution of the random vector $(N_{1,n} \dots, N_{K_n, n}, K_n)$ of the sizes and number of the blocks in {\it size biased order} (i.e. in order of their least elements) is given by
\begin{equation}
\label{sizebias2}
\mathbb{P}_{\alpha, V} (N_{1,n}=n_1, \dots, N_{K_n, n}=n_k, K_n=k)= 
\end{equation}
$$=\frac{n!}{n_k(n_k+n_{k-1})\cdots (n_k +\dots+n_1) \prod_{j=1}^k (n_j-1)!} V_{n,k} \prod_{j=1}^k (1 -\alpha)_{n_j -1},
$$
where the combinatorial factor accounts for the number of partitions of 
 $[n]$ in which the $j$-th block in order of appearance has $n_j$ elements. When the order of the blocks is irrelevant an alternative, more tractable coding for the joint distribution (\ref{sizebias2}) is in {\it exchangeable random order} (cfr. Eq. (2.7) in \cite{pit06}) 
\begin{equation}
\label{prior}
\mathbb{P}_{\alpha, V}(N_1^{ex}=n_1, \dots, N_{K_n}^{ex}=n_k, K_n=k)= \frac{n!}{\prod_{j=1}^k n_j!}\frac{1}{k!} V_{n,k} \prod_{j=1}^{k} (1 -\alpha)_{n_j-1},
\end{equation} 
that, from now on, we term {\it multivariate Gibbs distribution} of parameters $(n, \alpha, V)$. Corresponding {\it Gibbs sampling formula}, encoding the partition of $n$ by the vector %$(c_1, \dots, c_n)$ 
of the numbers of blocks of different sizes,  is obtained by the obvious change of variable in (\ref{prior}) and is given by
\begin{equation}
\label{sampling}
\mathbb{P}_{\alpha, V}(C_{1,n}=c_1, \dots, C_{n,n}=c_n)={n!}V_{n,k}\prod_{i=1}^n \frac{[(1-\alpha)_{i-1}]^{c_i}}{(i!)^{c_i} c_i!},
\end{equation}
for $c_i=\sum_{j=1}^k 1\{n_j=i\}$,  for $i=1, \dots, n$, $\sum_{i=1}^n i c_i=n$ and $\sum_{i=1}^n c_i=k$.
Note that this is the general {\it Gibbs} analog of the {\it Ewens sampling formula} (cfr. \cite{ewe72}) 
\begin{equation}
\label{ewensEPPF}
\mathbb{P}_{\theta}(C_{1,n}=c_1,\dots, C_{n,n}=c_n)= \frac{n! \theta^k}{(\theta)_n} \prod_{i=1}^n \frac{1}{(i)^{c_i} c_i!},
\end{equation}
encoding by the vector of counts the Dirichlet $(\theta)$ partition model, (\cite{fer73, kin75}), whose EPPF is well-known to arise for $\alpha=0$ in (\ref{2par}).
A comprehensive reference for the study of (\ref{sampling}), also called {\it component frequency spectrum}, for general combinatorial random structures is \cite{arr03}.\\

In this paper we study marginals of (\ref{prior}), both conditional and unconditional, in order to derive joint falling factorial moments of corresponding conditional and unconditional sampling formulas. 
%Notice that (\ref{prior}) can be view as a particular general kind of mixture of multinomial distributions with random number of blocks. 
Our main motivation comes from applications in Bayesian nonparametric estimation in species sampling problems. In this setting, given $n$ observations from a population of species with multiplicities of the first $k$ species observed $(n_1, \dots, n_k)$, interest may lie in conditional predictive estimation of quantities related to a further sample of $m$ observations (cfr. e.g. \cite{lmp07}, \cite{lpw08}), or in conditional estimation of some diversity index of the whole population, (see e.g. \cite{cer12}). A common {\it prior} assumption in the Bayesian nonparametric approach is that the unknown relative abundances $(P_i)_{i\geq 1}$ of the species in the population  follow a random discrete distribution belonging to the Gibbs family, i.e. are such that, by Kingman's correspondence (cfr. \cite{kin78}),
\begin{equation}
\label{kingm}
 \sum_{(i_1, \dots, i_k)} \mathbb{E} \left[  \prod_{j=1}^k P_{i_j}^{n_j} \right]= V_{n,k} \prod_{j=1}^k (1-\alpha)_{n_j-1},
\end{equation}
where $(i_1, \dots, i_k)$ ranges over all ordered $k$-tuples of distinct positive integers. This is equivalent to assume that the theoretically infinite sequence of species {\it labels} $(X_i)_{i \geq 1}$ is exchangeable with almost surely discrete {\it de Finetti} measure representable as $P(\cdot)=\sum_{i=1}^{\infty} P_i \delta_{Y_i}(\cdot)$, for $(P_i)$ any rearrangement of the ranked frequencies $(P_i^{\downarrow})$ satisfying (\ref{kingm}), independent of $(Y_i) \sim$ IID $H(\cdot)$, for $H$ some non atomic probability distribution. 

Actually the study of {\it conditional Gibbs structures} in this perspective has been initiated  in \cite{lmp07} and \cite{lpw08} and some results for conditional falling factorial moments of {\it components} of (\ref{sampling}) are in \cite {flp12a}.  Nevertheless in those papers some confusion arises  between {\it conditional EPPFs}, and {\it conditional multivariate distributions} of the vector of sizes and number of the blocks in exchangeable random order, which heavily affects the complexity of the proofs. 

Here,  after deriving  marginals of conditional and unconditional {\it multivariate Gibbs distributions}, we obtain {\it joint} falling factorial moments of any order of (\ref{sampling}), both conditional and unconditional, and explicit formulas for some distributions of interest generalizing some particular cases obtained in \cite{flp12a}, in a direct way. Our analysis, besides providing a more effective technique for the study of Gibbs sampling formulas, with a view toward Bayesian nonparametric applications, establishes the first systematic study of joint multivariate distributions arising from Gnedin-Pitman's Gibbs partition models.  The paper is organized as follows: in Section 2 we provide marginals of (\ref{prior}) and, resorting to a result in \cite{johkot05} for sum of non independent indicators, derive general formulas for joint falling factorial moments of (\ref{sampling}), together with some explicit marginal distributions and their expected values. In Section 3 we derive {\it conditional multivariate Gibbs distributions} and their marginals,  for sizes and number of {\it new} blocks induced by the additional $m$-sample. A complete analysis is performed for {\it conditional Gibbs sampling formulas} exploiting the same technique adopted in Section 2. In Section 4 we focus on  {\it multivariate P\'olya-like} distributions arising by the conditional allocation of the additional sample in {\it old} blocks. Finally, in Section 5, we provide an application of marginals of multivariate Gibbs distributions to a Bayesian nonparametric estimation of a  $m$-step ahead probability to detect at observation $n+m+1$ a species already observed a certain number of times. 

\section{Marginals of multivariate Gibbs distributions}
To obtain the marginal distributions for general Multivariate Gibbs distributions (\ref{prior}) it is enough to resort to the definition of generalized {\it central} Stirling numbers (cfr. Eq. 1.9 and 1.19 in  \cite{pit06}) (see the Appendix for further details) 
%\begin{equation}
%\label{numstirl}
$$ S_{n,k}^{-1, -\alpha}= \frac{n!}{  k!}\sum_{(n_1, \dots, n_k)} \prod_{j=1}^k \frac{(1-\alpha)_{n_j-1}}{n_j!},
%n! \sum_{c_1, \dots, c_n} \prod_{i=1}^n \frac{[(1 -\alpha)_{l-1}]^{c_i}}{(l!)^{c_i} c_i!}
$$
%\end{equation}
where the sum ranges over all  $(n_1, \dots, n_k)$ compositions of $n$. From now on we refer to (\ref{prior}) omitting the {\it ex} power in the notation. 
\begin{prop} Under a general Gibbs partition model (\ref{EPPFgibbs}) of parameters $(\alpha, V)$, for each $n \geq 1$ the $r$-dimensional marginal of (\ref{prior}), for $0 \leq k-r \leq n -\sum_{j=1}^r n_j$, is given by 
\begin{equation}
\label{rmarg}
\mathbb{P} (N_1=n_1, \dots, N_r=n_r, K_n=k)= 
\end{equation}
$$
=\frac{n!}{\prod_{j=1}^r n_j!}\prod_{j=1}^r (1 -\alpha)_{n_j-1} 
 \frac{V_{n,k}}{k!} \sum_{(b_1, \dots, b_{k-r})} \frac{1}{\prod_i b_i!}\prod_{i=1}^{k-r} (1-\alpha)_{b_i-1}
$$
for $(b_1, \dots, b_{k-r})$ such that  $b_i >0$ $\forall i$ and $\sum_{i} b_i= n-\sum_{j=1}^r n_j$. Multiplying and dividing by $(n-\sum_{j=1}^{r} n_j)!$ and $(k-r)!$ yields
$$
=\frac{n!}{\prod_{j=1}^r n_j! (n -\sum_{j=1}^r n_j)!}\prod_{j=1}^r (1 -\alpha)_{n_j-1} \frac{V_{n,k}}{k_{[r]}} S_{n-\sum_{j=1}^r n_j, k-r}^{-1, -\alpha},
$$
for $(x)_{[n]}= (x)(x-1)\cdots(x-n+1)$.
\end{prop}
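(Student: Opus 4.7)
The plan is to marginalize the multivariate Gibbs distribution (\ref{prior}) directly, by summing out the sizes of the $k-r$ unobserved blocks, and then to recast the resulting sum via the definition of the generalized central Stirling numbers $S_{\cdot,\cdot}^{-1,-\alpha}$ recalled just before the statement. A key preliminary observation is that, because (\ref{prior}) is the joint law in exchangeable random order, the $k$ block sizes are already labelled in a symmetric way, with the compensating factor $1/k!$ built into the formula; this makes the marginalisation completely mechanical, which is precisely the point the author stresses in the introduction as an advantage over working with the EPPF.

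Concretely, I would first write
$$\mathbb{P}(N_1=n_1,\dots,N_r=n_r,K_n=k)=\sum_{(b_1,\dots,b_{k-r})}\mathbb{P}(N_1=n_1,\dots,N_r=n_r,N_{r+1}=b_1,\dots,N_k=b_{k-r},K_n=k),$$
where the sum ranges over all compositions $(b_1,\dots,b_{k-r})$ of $n-\sum_{j=1}^r n_j$ into $k-r$ strictly positive parts. Plugging (\ref{prior}) into the right-hand side and pulling out the factors that depend only on $(n_1,\dots,n_r)$ yields immediately the first displayed expression in the proposition.

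For the Stirling form, I would then multiply and divide by $(n-\sum_{j=1}^r n_j)!$ and by $(k-r)!$ so that the residual sum matches exactly the combinatorial definition
$$S_{m,j}^{-1,-\alpha}=\frac{m!}{j!}\sum_{(b_1,\dots,b_j)}\prod_{i=1}^{j}\frac{(1-\alpha)_{b_i-1}}{b_i!}$$
with $m=n-\sum_{j=1}^r n_j$ and $j=k-r$. Using $k!/(k-r)!=k_{[r]}$ to collapse the leftover factorial ratio into a falling factorial, and absorbing $1/m!$ into the multinomial coefficient $n!/\bigl(\prod_{j=1}^r n_j!\,m!\bigr)$, produces the second form.

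I do not anticipate a genuine obstacle here: the argument is combinatorial bookkeeping, with no analytic inequality or limiting step involved. The only points that require care are checking that the residual sum is over strictly positive compositions (so that all $k-r$ unobserved blocks are genuinely non-empty) and that the stated range $0\leq k-r\leq n-\sum_{j=1}^r n_j$ makes the sum non-vacuous; both facts are immediate, since each $N_i\geq 1$ by definition of a block and the total mass is $n$.
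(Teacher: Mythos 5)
Your proposal is correct and follows exactly the route the paper takes (the paper in fact embeds the derivation in the statement itself): marginalize the exchangeable-order joint law (\ref{prior}) over the sizes of the remaining $k-r$ blocks, i.e.\ sum over strictly positive compositions of $n-\sum_{j=1}^r n_j$, then multiply and divide by $(n-\sum_{j=1}^r n_j)!$ and $(k-r)!$ to recognize $S_{n-\sum_j n_j,\,k-r}^{-1,-\alpha}$ and collapse $k!/(k-r)!$ into $k_{[r]}$. No gaps.
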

\begin{coro} By a known result in \cite{gnepit06} for each model (\ref{EPPFgibbs}) the number of blocks $K_n$ has distribution
$$
\mathbb{P}(K_n=k)= V_{n,k}S_{n,k}^{-1, -\alpha},
$$
hence, conditioning (\ref{rmarg}) on $K_n=k$ yields
\begin{equation}
\label{marg_2}
\mathbb{P} (N_1=n_1, \dots, N_r=n_r| K_n=k)=  
\end{equation}
$$
\frac{n!}{\prod_{j=1}^r n_j! (n -\sum_{j=1}^{r} n_j)!}\frac{\prod_{j=1}^r (1 -\alpha)_{n_j-1} }{k_{[r]}} \frac{S_{n-\sum_{j=1}^r n_j, k-r}^{-1, -\alpha}}{S_{n,k}^{-1, -\alpha}}
$$
independently of the specific Gibbs model. For $r=k$ this is the general Gibbs analog of Eq. (41.8) in \cite{ewetav95}, and for $r=1$, $0 \leq k-1 \leq n-n_1$ and $n_1=1, \dots, n-k+1$
$$
\mathbb{P}(N_1=n_1| K_n=k)= {n \choose n_1} \frac{(1-\alpha)_{n_1-1}}{k} \frac{S_{n-n_1, k-1}^{-1, -\alpha, }}{S_{n,k}^{-1, -\alpha}}
$$
with expected value
$$
\mathbb{E}(N_1| K_n=k)= \frac{n}{k} \frac{S_{n-1, k-1}^{-1, -\alpha, -(1-\alpha)}}{S_{n,k}^{-1, -\alpha}}, 
$$
for $S_{n,k}^{-1, -\alpha, \gamma}$ generalized non-central Stirling numbers (see (\ref{noncentralsti}) in the Appendix).
Marginalizing (\ref{rmarg}) with respect to $K_n$ yields
$$
\mathbb{P} (N_1=n_1, \dots, N_r=n_r)= 
$$
$$=\frac{n!}{\prod_{j=1}^r n_j! (n -\sum_{j=1}^{r} n_j)!}\prod_{j=1}^r (1 -\alpha)_{n_j-1} \sum_{k-r=0}^{n -\sum_{j=1}^r n_j} \frac{V_{n,k} }{k_{[r]}} S_{n-\sum_{j=1}^r n_j, k-r}^{-1, -\alpha}. 
$$
\end{coro}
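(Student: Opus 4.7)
The plan is to obtain each of the five assertions of the Corollary as an almost immediate consequence of Proposition 1 together with the definition of the generalized (non\mbox{-})central Stirling numbers recalled in the Appendix.

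For the distribution of $K_n$ I would specialize Proposition 1 to $r=0$: summing (\ref{prior}) over all $k$-compositions $(n_1,\dots,n_k)$ of $n$ factors as $V_{n,k}$ times exactly the series that defines $S_{n,k}^{-1,-\alpha}$; alternatively one quotes the Gnedin--Pitman result directly. The conditional marginal (\ref{marg_2}) is then obtained by dividing (\ref{rmarg}) by $\mathbb{P}(K_n=k)$; the factor $V_{n,k}$ cancels, which is precisely what produces the advertised independence from the specific Gibbs weights. The $r=1$ version is the substitution $r=1$ in (\ref{marg_2}), using $k_{[1]}=k$.

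To compute $\mathbb{E}(N_1\mid K_n=k)$ I would form
\[
\mathbb{E}(N_1\mid K_n=k)=\sum_{n_1=1}^{n-k+1} n_1\binom{n}{n_1}\frac{(1-\alpha)_{n_1-1}}{k}\,\frac{S_{n-n_1,k-1}^{-1,-\alpha}}{S_{n,k}^{-1,-\alpha}},
\]
apply the identity $n_1\binom{n}{n_1}=n\binom{n-1}{n_1-1}$, and reindex $m=n_1-1$ so that the remaining sum reads
\[
\sum_{m=0}^{n-k}\binom{n-1}{m}(1-\alpha)_{m}\,S_{n-1-m,\,k-1}^{-1,-\alpha}.
\]
This is precisely the convolution (\ref{noncentralsti}) defining the non-central generalized Stirling number $S_{n-1,k-1}^{-1,-\alpha,-(1-\alpha)}$, so the expectation collapses to $(n/k)\,S_{n-1,k-1}^{-1,-\alpha,-(1-\alpha)}/S_{n,k}^{-1,-\alpha}$, as claimed.

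Finally, the unconditional marginal is obtained by summing (\ref{rmarg}) over admissible $k$: for fixed $(n_1,\dots,n_r)$ the index $k-r$ ranges over $\{0,1,\dots,n-\sum_{j=1}^{r}n_j\}$, so pulling the $k$-free prefactor out of the sum leaves exactly the displayed expression involving $V_{n,k}/k_{[r]}$ and $S_{n-\sum n_j,\,k-r}^{-1,-\alpha}$. The main obstacle in the whole argument is the expected-value step: one must recognize that a binomial-weighted convolution of central Stirling numbers with rising factorials $(1-\alpha)_m$ is itself a non-central Stirling number, which is why cross-referencing definition (\ref{noncentralsti}) in the Appendix is crucial. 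Every other assertion reduces to direct substitution and cancellation of $V_{n,k}$.
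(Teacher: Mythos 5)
Your proposal is correct and follows essentially the same route the paper takes: the Corollary is presented there as an immediate consequence of Proposition 1, the Gnedin--Pitman formula for $\mathbb{P}(K_n=k)$, and the convolution (\ref{noncentralsti}), which is exactly how you proceed. Your explicit verification of the expected-value step (writing $n_1\binom{n}{n_1}=n\binom{n-1}{n_1-1}$, reindexing, and recognizing the binomial convolution of $(1-\alpha)_m$ with $S_{n-1-m,k-1}^{-1,-\alpha}$ as $S_{n-1,k-1}^{-1,-\alpha,-(1-\alpha)}$) is the only detail the paper leaves implicit, and it checks out.
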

\subsection{Joint factorial moments of Gibbs sampling formulas}

Joint falling factorial moments for the Ewens' sampling formula (\ref{ewensEPPF}) of order $(r_1, \dots, r_n)$, for $r_l$ non negative integers and $n-\sum_l lr_l \geq 0$, are in \cite{ewetav95} (cfr. Eq. (41.9)) and correspond to 
$$
\mathbb{E}_{\theta} \left[\prod_{l=1}^n (C_{l,n})_{[r_l]}\right]= \frac{n!}{(n -\sum_{l=1}^n lr_l)!} \frac{(\theta)_{n -\sum_{l=1}^n lr_l}}{(\theta)_n} \prod_{l=1}^n \left(\frac{\theta }{l} \right)^{r_l}.
$$
Under the same conditions, the generalization to the $(\alpha, \theta)$ Poisson-Dirichlet partition model (\ref{2par}) has been obtained  is \cite{yamsib00} and is given by
$$
\mathbb{E}_{\alpha, \theta} \left[\prod_{l=1}^n (C_{l,n})_{[r_l]}\right]= \frac{n!}{(n -\sum_{l=1}^n lr_l)!} \frac{(\theta + \alpha)_{\sum_l r_l -1 \uparrow \alpha}}{(\theta +1)_{n-1}} \times
$$
$$\times \prod_{l=1}^n \left(\frac{(1- \alpha)_{l-1}}{l!}\right)^{r_l} (\theta + \alpha \sum_l r_l)_{n -\sum lr_l}.
$$
In the following Proposition we obtain the general result for the Gibbs sampling formula (\ref{sampling}) by resorting to a result in \cite{johkot05}, first established in \cite{dem18} then studied in \cite{jor67}. See also \cite{iye49, iye58}. 
\begin{prop}
\label{joint_sampl}Under a general $(\alpha, V)$ Gibbs partition model, joint falling factorial moments of the vector of counts $(C_{1,n}, \dots, C_{n,n})$ of order $(r_1, \dots, r_n)$ for $\sum_l lr_l  \leq n$ are given by
\begin{equation}
\label{jointprior}
\mathbb{E} \left[\prod_{l=1}^n (C_{l,n})_{[r_l]}\right]= \frac{n!}{\prod_{l=1}^n (l!)^{r_l}}
\frac{\prod_{l=1}^n \left[(1-\alpha)_{l-1}\right]^{r_l}}{(n -\sum_l l r_l)!} \sum_{k- \sum_l r_l=0}^{n- \sum_l l r_l}  V_{n,k} S_{n-\sum_l l r_l, k- \sum_l r_l}^{-1, -\alpha}
\end{equation} 
for $0 \leq k-\sum_{l=1}^n r_l \leq n-\sum_{l=1}^n lr_l$.
For $r_l=r \leq  \lceil{\frac nl \rceil}$ and $r_j=0$ for every $j \neq l$, the $r$-th falling factorial moment of $C_{l,n}$ results
\begin{equation}
\label{factmom_1}
\mathbb{E}\left[(C_{l,n})_{[r]}\right]= \frac{n! [(1-\alpha )_{l-1}]^r}{(l!)^r  (n-lr)!} \sum_{k-r=0}^{n-rl}
 V_{n,k} S_{n-rl, k-r}^{-1, -\alpha}.
\end{equation}
\end{prop}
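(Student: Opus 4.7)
The plan is to reduce the joint factorial moment to the marginal of the multivariate Gibbs distribution already derived in Proposition 1, by representing each count as a sum of indicators and invoking the de Morgan--Jordan identity for factorial moments of sums of non-independent indicators (the result cited from \cite{johkot05}).

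First I would write, working in the exchangeable random order of (\ref{prior}),
$$C_{l,n} = \sum_{j=1}^{K_n} \mathbf{1}\{N_j = l\},$$
so that by the cited identity
$$(C_{l,n})_{[r_l]} = \sum_{(j_1, \dots, j_{r_l})} \mathbf{1}\{N_{j_1} = l\} \cdots \mathbf{1}\{N_{j_{r_l}} = l\},$$
where the sum runs over ordered $r_l$-tuples of distinct indices from $\{1, \dots, K_n\}$. Multiplying across $l = 1, \dots, n$ and noting that a block has a unique size (so cross-group index collisions contribute zero), one expands $\prod_l (C_{l,n})_{[r_l]}$ into a sum of products of indicators indexed by ordered $r$-tuples of distinct block labels ($r = \sum_l r_l$), with the first $r_1$ coordinates assigned to size $1$, the next $r_2$ to size $2$, and so on.

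Second, I would take expectations and exploit the exchangeability of the random order: for any ordered $r$-tuple of distinct blocks, and conditionally on $K_n = k$, the probability that these specific blocks have the prescribed sizes equals the marginal $\mathbb{P}(N_1 = n_1, \dots, N_r = n_r, K_n = k)$ from (\ref{rmarg}), taken with $(n_1, \dots, n_r) = (\underbrace{1, \dots, 1}_{r_1}, \underbrace{2, \dots, 2}_{r_2}, \dots, \underbrace{n, \dots, n}_{r_n})$. Since the number of ordered $r$-tuples of distinct indices chosen from $k$ is exactly $k_{[r]}$, summing produces
$$\mathbb{E}\!\left[\prod_{l=1}^n (C_{l,n})_{[r_l]}\right] = \sum_{k \ge r} k_{[r]}\, \mathbb{P}\!\left(N_1,\dots,N_r \text{ have the prescribed sizes},\, K_n = k\right).$$
Plugging in (\ref{rmarg}), the factor $k_{[r]}$ cancels the $k_{[r]}$ in the denominator, the products collapse as $\prod_j n_j! = \prod_l (l!)^{r_l}$, $\prod_j (1-\alpha)_{n_j - 1} = \prod_l [(1-\alpha)_{l-1}]^{r_l}$, and $\sum_j n_j = \sum_l l r_l$, yielding (\ref{jointprior}) after reindexing $k \mapsto k - r$. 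The univariate case (\ref{factmom_1}) then drops out by taking $r_j = 0$ for $j \ne l$ and $r_l = r$.

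The probabilistic content is essentially the exchangeability argument that justifies replacing the sum over ordered tuples of distinct blocks with $k_{[r]}$ times the marginal of (\ref{prior}); the rest is combinatorial bookkeeping. The only potential pitfall is the range of summation, which must be handled carefully: the lower bound $k - r = 0$ corresponds to the case in which the prescribed $r$ blocks account for all blocks, and the upper bound $n - \sum_l l r_l$ corresponds to the maximum number of additional singletons compatible with the remaining mass, both naturally inherited from the constraint $0 \le k - r \le n - \sum_j n_j$ in Proposition 1.
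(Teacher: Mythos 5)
Your proposal is correct and follows essentially the same route as the paper's proof: decompose each $C_{l,n}$ into indicators, apply the de Moivre--Jordan factorial-moment identity for sums of dependent indicators, use exchangeability of the block sizes given $K_n=k$ to reduce everything to $k_{[r]}$ times the $r$-dimensional marginal of Proposition 1, and then substitute (\ref{rmarg}). Your explicit remark that cross-group index collisions vanish because a block has a unique size is a small clarification the paper leaves implicit, but the argument is otherwise identical.
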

\begin{proof} For $K_n=k$, let $C_{l,n}= \sum_{j=1}^k 1 (N_{j}=l)$. Then by a result for sum of non independent indicators  r.v.s in Johnson \& Kotz (2005, Sect. 10.2), or Charalambides (2005, Example 1.12), for $r \leq n$
\begin{equation}
\label{momentr}
\mathbb{E}\left[(C_l)_{[r]}\right]= \mathbb{E}(\sum_{j=1}^k {1\{N_j=l\}})_{[r]}= r! \sum_{(a_1, \dots, a_r)} \mathbb{P}(N_{a_1}=l, \dots, N_{a_r}=l),
\end{equation}
where the summation is extended over all $r$-combinations  $(a_1, \dots, a_r)$ of $\{1, \dots, k\}$.
Since in our case the number of blocks $K_n$ is random, and the vector $(N_1, \dots, N_r|K_n=k)$ is exchangeable then, for $l=1, \dots, n$,  
$$
\mathbb{E}\left[(C_l)_{[r]}\right]= \sum_{k-r=0}^{n-rl} \mathbb{E}\left[(C_l|K_n=k)_{[r]}\right] \mathbb{P}(K_n=k)=
$$
$$= \sum_{k-r=0}^{n-rl}r! {k \choose r} \mathbb{P}(N_1=l, \dots, N_r=l|K_n=k) \mathbb{P}(K_n=k)= 
$$
$$=\sum_{k-r=0}^{n-rl} r! {k \choose r} \mathbb{P}(N_1=l, \dots, N_r=l, K_n=k).
$$ 
hence
$$
\mathbb{E} \left[\prod_{l=1}^n (C_{l,n})_{[r_l]}\right]= \sum_{k- \sum_l r_l=0}^{n -\sum_{l} lr_l} (\prod_{l=1}^n r_l!) \frac{k!}{\prod_l r_l! (k- \sum_l r_l)! }\times
$$
\begin{equation}
\label{kalea}
\times \mathbb{P}(N_1=1, \dots, N_{r_1}=1, \dots, N_{\sum_l r_l -r_{n}+1}= n, \dots, N_{\sum_l r_l}= n, K_n=k).
\end{equation}
Inserting (\ref{rmarg}) in (\ref{kalea}) the result follows. \end{proof}
% a priori (Proposition 1.), per il momento fattoriale decrescente $r$-simo a posteriori sui vecchi blocchi (Theorem 1. e Corollary 1.), per il momento fattoriale decrescente $r$-simo a posteriori sui nuovi blocchi (Theorem 2. e Corollary 2.) e per il risultato congiunto (Theorem 3.) in modo diretto. 

Notice that (\ref{jointprior}) generalizes the result in \cite{flp12a} Eq. (11), stated in terms of {\it generalized factorial coefficients}, (cfr. Eq. \eqref{chara} and \eqref{coeff}  in the Appendix), which corresponds to (\ref{factmom_1}). Next Proposition generalizes Proposition 2 (Dirichlet case) and  Proposition 4 (two parameter Poisson-Dirichlet case) in \cite{flp12a}. %which follow inserting the specific form of $V_{n,k}$ in \eqref{dist_marg_uncon}.
% but in which the extremes of the sum do not appear to be correct. 
%\bigskip
\begin{prop}
\label{dist_marg_uncon}
Under a general $(\alpha, V)$ Gibbs partition model, for each $n \geq 1$ the law of $C_{l,n}$, the number of blocks of size $l$, has distribution
\begin{equation}
\label{margL}
\mathbb{P}(C_{l,n}=x)=\frac{n! [(1-\alpha)_{l-1}]^x}{x!(l!)^{x}} \sum_{r=0}^{\lceil\frac nl\rceil-x} \frac{(-1)^{r}[(1-\alpha)_{l-1}]^r}{r! (l!)^r (n-rl-lx)!}\times
\end{equation}
$$
%\begin{equation}
\times \sum_{k-r-x=0}^{n-rl-xl} V_{n,k} S_{n-rl-xl, k-r-x}^{-1, -\alpha}  
$$
%\end{equation}
for $x=0, \dots, \lceil{n/l}\rceil$, with expected value
\begin{equation}
\label{meanL}
\mathbb{E}(C_{l,n})= {n \choose l} (1 -\alpha)_{l-1} \sum_{k-1=0}^{n-l} V_{n,k}S_{n-l, k-1}^{-1, -\alpha}
\end{equation}
and the distribution of the number of singleton species $C_{1,n}$ follows from (\ref{margL})
\begin{equation}
\label{marg10}
\mathbb{P}(C_{1,n}=x)= \frac{n!}{x!} \sum_{r=0}^{n-x} \frac{(-1)^r}{r! (n-r-x)!} \sum_{k-r-x=0}^{n-r-x} V_{n,k} S_{n-r-x, k-r-x}^{-1, -\alpha}.
\end{equation}
with expected value
\begin{equation}
\label{mean_10}
\mathbb{E}(C_{1,n})= n \sum_{k-1=0}^{n-1} V_{n,k} S_{n-1, k-1}^{-1, -\alpha}.
\end{equation}
\end{prop}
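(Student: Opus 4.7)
My plan is to recover the law of $C_{l,n}$ from its falling factorial moments --- already obtained in Proposition~\ref{joint_sampl} --- by inverting the factorial-moment/probability relation. Since $C_{l,n}$ is integer-valued with support in $\{0,1,\dots,\lceil n/l\rceil\}$, its point masses are recovered from its (terminating) sequence of factorial moments via the standard inversion identity
$$\mathbb{P}(C_{l,n}=x) \;=\; \frac{1}{x!}\sum_{r=0}^{\lceil n/l\rceil - x} \frac{(-1)^r}{r!}\,\mathbb{E}[(C_{l,n})_{[x+r]}],$$
which may be derived, for example, from $\mathbb{E}[(1+t)^{C_{l,n}}] = \sum_{k\geq 0} \mathbb{E}[(C_{l,n})_{[k]}]\,t^k/k!$ by extracting the coefficient of $z^x$ in $\mathbb{E}[z^{C_{l,n}}]$ after the substitution $z = 1 + t$.

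Given this identity, the derivation of (\ref{margL}) reduces to algebraic bookkeeping. I would substitute the closed form (\ref{factmom_1}) for $\mathbb{E}[(C_{l,n})_{[x+r]}]$ into the inversion sum, and then pull outside the common factor $n!\,[(1-\alpha)_{l-1}]^x/(x!\,(l!)^x)$, which does not depend on the summation index $r$; what remains matches exactly the double sum on the right-hand side of (\ref{margL}). The expected value (\ref{meanL}) follows at once by specialising (\ref{factmom_1}) to $r=1$, using $(C_{l,n})_{[1]}=C_{l,n}$ and $n!/(l!\,(n-l)!)=\binom{n}{l}$. The two formulas (\ref{marg10}) and (\ref{mean_10}) for the count of singleton blocks are then the trivial specialisation $l=1$, via $(1-\alpha)_0=1$ and $1!=1$.

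There is no real mathematical obstacle: the nontrivial combinatorial content is already packaged in Proposition~\ref{joint_sampl}, and what remains is essentially reindexing. The only subtlety worth flagging is the truncation of the outer sum at $r=\lceil n/l\rceil-x$ in (\ref{margL}), which reflects the fact that $C_{l,n}\leq\lceil n/l\rceil$, so that the factorial moments $\mathbb{E}[(C_{l,n})_{[x+r]}]$ vanish as soon as $l(x+r)>n$; equivalently, the inner sum over $k$ in (\ref{factmom_1}) becomes empty.
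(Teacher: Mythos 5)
Your proposal is correct and coincides with the paper's own argument: the paper likewise obtains (\ref{margL}) by applying the factorial-moment inversion formula (\ref{momprob}) to the moments (\ref{factmom_1}), gets (\ref{meanL}) from the case $r=1$, and specialises to $l=1$ for the singleton formulas. The only difference is that you spell out the substitution and the truncation of the sum explicitly, which the paper leaves implicit.
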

\begin{proof}
(\ref{margL}) arises by the known relationship between discrete probability distributions and falling factorial moments (cfr.  (\ref{momprob}) in the Appendix). (\ref{meanL}) follows from (\ref{factmom_1}) for $r=1$. \eqref{marg10} and \eqref{mean_10} follow for $l=1$ and generalize (41.10) and (41.11) in \cite{ewetav95} to the entire Gibbs family.
\end{proof}
%Formulas (\ref{marg10}) and (\ref{margL}) generalize known marginal cases like Dirichlet 
%$(\theta)$ in Ewens and Tavaré (19xxx, Eq. 41.10) and two-parameter Poisson-Dirichlet $(\alpha, \theta)$ in Yamato and Sibuya (2000, Eq. ())

\section{Conditional multivariate Gibbs distributions}
The study of {\it conditional exchangeable random partitons}, i.e. random partitions starting with an initial allocation of the first $n$ natural integers in a certain number $k$ of blocks, has been initiated in \cite{lmp07} in view of proposing a Bayesian conditional nonparametric estimation of the richness of a population of species under {\it priors} on the unknown relative abundances belonging to the Gibbs class. (See also \cite{gne10}, Sect. 7). In \cite{cer09} it has been shown that the corresponding conditional partition probability function, describing the conditional allocation in {\it new} and {\it old} blocks of integers $n+1, n+2, \dots$, can be obtained by a multi-step variation of the classical {\it Chinese restaurant process} construction (CRP) for exchangeable partitions, (first devised by Dubins and Pitman, see \cite{pit06} Ch. 3). This variation helps to properly place the Bayesian nonparametric approach to species sampling problems under Gibbs priors into the Gnedin-Pitman's exchangeable random partitions theoretical framework. 
%It provides $m$ steps ahead predictive rules thus translating in terms of conditional partitions of $[m]$ the Bayesian posterior predictive analysis of the random behaviour of an additional $m$-sample of observations given an initial allocation of $n$ species in $k$ blocks. 
Here we recall the multi-step CRP for completeness.
\begin{prop} (Cerquetti, 2009) Given an infinite EPPF  model (\ref{EPPFgibbs}), assume that an unlimited numbers of groups of customers arrive sequentially in a restaurant with an unlimited numbers of circular tables, each capable of sitting an unlimited numbers of customers. Given the placement of the first group of $n$ customers in a ${\bf n}=(n_1, \dots, n_j)$ configuration in $j$ tables, a new {\it group} of $m \geq 1$ customers is \\\\
a) all seated at the $j$ old tables in configuration ${\bf m}=(m_1,\dots, m_j)$, for $m_i \geq 0$, $\sum_{i=1}^j m_i=m$, with probability 
\begin{equation}
\label{gibbsallold}
p_{\bf m}({\bf n})=\frac{V_{n+m,j}\prod_{i=1}^j (1-\alpha)_{n_i+m_i-1}}{V_{n,j}\prod_{i=1}^j (1-\alpha)_{n_i -1}}=\frac{V_{n+m,j}}{V_{n,j}} \prod_{i=1}^j (n_i-\alpha)_{m_i},
\end{equation}
b) all seated at $k$ {\it new} tables in configuration ${\bf s}=(s_1, \dots, s_{k})$, for $\sum_{i=1}^{k} s_i =m$, $1 \leq k \leq m$, $s_i \geq 1$, with probability  
\begin{equation}
\label{gibbsallnew}
p^{\bf s}({\bf n})=\frac{V_{n+m,j+k}\prod_{i=1}^j (1-\alpha)_{n_i -1}\prod_{i=1}^{k} (1-\alpha)_{s_i -1}}{V_{n,j} \prod_{i=1}^j (1-\alpha)_{n_i -1}}=\frac{V_{n+m, j+k}}{V_{n,j}}\prod_{i=1}^{k} (1-\alpha)_{s_i -1},\\
\end{equation}
c) a subset $s < m$ of the new customers is seated at $k$ {\it new} tables in configuration $(s_1,\dots,s_{k})$ and the remaining $m-s$ customers are seated at the {\it old} tables in configuration $(m_1,\dots, m_j)$ for $\sum_{i=1}^{j} m_i= m-s$, $1 \leq s \leq m$, $\sum_{i=1}^{k} s_i=s$, $m_i \geq 0$, $s_i \geq 1$ with probability
$$
%\label{gibbsoldnew}
p_{\bf m}^ {\bf s}({\bf n})=\frac{V_{n+m,j+k} \prod_{i=1}^j (1-\alpha)_{n_i+m_i-1}\prod_{i=1}^{k}(1-\alpha)_{s_i-1}}{V_{n,j}\prod_{i=1}^j (1-\alpha)_{n_i-1}}=
$$
which, by the multiplicative property of rising factorials (\ref{multiplicative}), simplifies to
\begin{equation}
\label{oldenew}
=\frac{V_{n+m,j+k}}{V_{n,j}}\prod_{i=1}^j (n_i-\alpha)_{m_i}\prod_{i=1}^{k}(1-\alpha)_{s_i-1}.
\end{equation}	
\end{prop}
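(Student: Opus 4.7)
The plan is to read each of (\ref{gibbsallold})--(\ref{oldenew}) off as a ratio of two EPPF values evaluated at the block-size vectors of the extended and initial partitions. Because (\ref{EPPFgibbs}) defines a consistent infinite exchangeable partition of $\mathbb{N}$, for any specific initial partition $\pi_n$ of $[n]$ with block sizes $(n_1,\dots,n_j)$ and any specific extension $\pi_{n+m}$ of $\pi_n$ to $[n+m]$, the conditional probability factors as
$$
\mathbb{P}(\pi_{n+m}\mid\pi_n) \;=\; \frac{p_{\alpha,V}(\text{block sizes of }\pi_{n+m})}{p_{\alpha,V}(n_1,\dots,n_j)} \;=\; \frac{p_{\alpha,V}(\text{block sizes of }\pi_{n+m})}{V_{n,j}\prod_{i=1}^{j}(1-\alpha)_{n_i-1}}.
$$

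Next I would plug in the block sizes of $\pi_{n+m}$ case by case. In (a) the same $j$ blocks persist with enlarged sizes $(n_1+m_1,\dots,n_j+m_j)$, giving numerator $V_{n+m,j}\prod_{i=1}^{j}(1-\alpha)_{n_i+m_i-1}$. In (b) the $j$ original blocks are untouched while $k$ new blocks of sizes $(s_1,\dots,s_k)$ are opened, so the factor $\prod_{i=1}^{j}(1-\alpha)_{n_i-1}$ in the numerator cancels cleanly against the denominator, leaving $(V_{n+m,j+k}/V_{n,j})\prod_{i=1}^{k}(1-\alpha)_{s_i-1}$. Case (c) is the obvious combination, with extended block sizes $(n_1+m_1,\dots,n_j+m_j,s_1,\dots,s_k)$ in the numerator.

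The compact forms on the right-hand sides of (\ref{gibbsallold}) and (\ref{oldenew}) then follow by invoking the multiplicative rising-factorial identity (\ref{multiplicative}), namely $(1-\alpha)_{n_i+m_i-1}=(1-\alpha)_{n_i-1}(n_i-\alpha)_{m_i}$, which telescopes each $(1-\alpha)_{n_i-1}$ factor against its counterpart in the denominator. After this substitution the three identities are immediate.

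The only delicate point is justifying the first display: one must argue that the conditional probability of a specific partition $\pi_{n+m}$ of $[n+m]$ given a specific $\pi_n$ of $[n]$ really is the quoted ratio of EPPFs. This uses the defining property of the EPPF as the common probability of every specific partition of $[n+m]$ with the prescribed unordered block sizes, together with the consistency of the Gnedin--Pitman Gibbs family established in \cite{gnepit06}, which guarantees that the restriction of $\pi_{n+m}$ to $[n]$ reproduces the law of $\pi_n$. Once that is granted, cases (a)--(c) reduce to the bookkeeping above.
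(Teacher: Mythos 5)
Your argument is correct and is essentially the derivation the proposition already encodes: each displayed probability is written as the ratio of the Gibbs EPPF (\ref{EPPFgibbs}) evaluated at the block sizes of the extended partition of $[n+m]$ to the EPPF at $(n_1,\dots,n_j)$, and your justification of that ratio via consistency of the infinite EPPF, followed by the cancellation $(1-\alpha)_{n_i+m_i-1}=(1-\alpha)_{n_i-1}(n_i-\alpha)_{m_i}$ from (\ref{multiplicative}), is exactly what is needed. The paper recalls this proposition from Cerquetti (2009) without reproving it, so there is no competing argument to compare against.
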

Now, as in \cite{lpw08}, given the allocation of the first $n$ integers in $j$ blocks with multiplicities $(n_1, \dots, n_j)$, let $K_m$ be the number of {\it new} blocks generated by the additional $m$ integers, $(S_1, \dots, S_{K_m})$ the vector of the sizes of the {\it new} blocks in exchangeable random order and $S_{m}=\sum_{i=1}^{K_m} S_{i}$ the total number of {\it new} integers in {\it new} blocks. To obtain the joint conditional distribution of the vector $(K_m, S_m, S_1, \dots, S_{K_m})$ of the number and multiplicities of new blocks, and total observations in new blocks, it is enough to marginalize (\ref{oldenew}) with respect to all $(m_1, \dots, m_j)$ allocations of  $m-S_m$ observations in {\it old} blocks, and to multiply for the combinatorial coefficient accounting  for the number of partitions of $[m]$ providing the same sizes and the same number  $k$ of new blocks and the same number $s$ of integers in new blocks. We can hence state the following. 

\begin{prop} Under a general $(\alpha, V)$ Gibbs partition model the joint conditional distribution of $(S_m, K_m, S_1, \dots, S_{K_m})$, for $S_1, \dots, S_{K_m}$ in {\it exchangeable random order}, given the initial allocation of $n$ integers in $j$ blocks, corresponds to 
$$
\mathbb{P} (K_m=k, S_m=s, S_{1}= s_1, \dots, S_{K_m}=s_k| n_1, \dots, n_{j})=
$$
$$= \frac{s!}{s_1! \cdots s_k! k!} \frac{V_{n+m, j+k}}{V_{n,j}} {m \choose s} (n -j\alpha)_{m-s} \prod_{i=1}^{k} (1 -\alpha)_{s_i -1}=$$
or alternatively
\begin{equation}
\label{newblock}
=\frac{m!}{s_1! \cdots s_k! k! m-s!}  \frac{V_{n+m, j+k}}{V_{n,j}} (n -j\alpha)_{m-s} \prod_{i=1}^{k} (1 -\alpha)_{s_i -1}. 
\end{equation}
Moreover, conditioning on $S_m$, by Eq. (11) in \cite{lpw08}, yields
\begin{equation}
\label{uffa_2}
\mathbb{P} (K_m= k,  S_1=s_1, \dots, s_{K_m}=s_k| K_n=j, S_m=s)= 
\end{equation}
$$
=\frac{s!}{s_1! \cdots s_k! k!} \frac{V_{n+m, j+k}}{\sum_{i=0}^{s} V_{n+m, j+i} S_{s, i}^{-1, -\alpha}} \prod_{i=1}^k (1 -\alpha)_{s_i-1},
$$
while conditioning on $K_m$, by Eq. (4) in \cite{lmp07}, eliminates the dependency on the specific $(V_{n,k})$ Gibbs model as in \eqref{marg_2} 
\begin{equation}
\label{uffa_3}
\mathbb{P} (S_{1}= s_1, \dots, S_{K_m}=s_k| K_{m}=k, S_{m}=s, K_n=j)=
\end{equation}
$$= \frac{s!}{s_1! \cdots s_k! k!} \frac {\prod_{i=1}^k (1-\alpha)_{s_i -1}}{S_{s,k}^{-1, -\alpha}}.
$$
\end{prop}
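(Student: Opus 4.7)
The plan is to start from the one-step conditional formula (\ref{oldenew}) in Proposition 2, which gives the probability that $s$ of the $m$ new customers sit at $k$ new tables in configuration $(s_1,\dots,s_k)$ while the remaining $m-s$ fill the $j$ old tables in configuration $(m_1,\dots,m_j)$. To recover the joint law of $(K_m,S_m,S_1,\dots,S_{K_m})$ I would marginalize this expression over all compositions $(m_1,\dots,m_j)$ of $m-s$ with $m_i\geq 0$, and multiply by the combinatorial factor counting the partitions of $[m]$ that are compatible, in exchangeable random order on the new tables, with any such fixed allocation on the old tables.

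This combinatorial factor decomposes as $\binom{m}{s}\binom{m-s}{m_1,\dots,m_j}\frac{s!}{s_1!\cdots s_k!\,k!}$: the first binomial picks which $s$ integers go into new blocks, the second splits the remaining $m-s$ among the $j$ old blocks, and the last factor counts the exchangeable-random-order labellings of the $s$ new integers into $k$ new blocks of prescribed sizes. The only non-trivial ingredient will then be the multivariate Chu--Vandermonde identity for Pochhammer symbols,
\[
\sum_{m_1+\cdots+m_j = m-s} \binom{m-s}{m_1,\dots,m_j}\prod_{i=1}^j (n_i-\alpha)_{m_i} \;=\; (n-j\alpha)_{m-s},
\]
which uses $n_1+\cdots+n_j=n$. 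Feeding this into the sum and collapsing $\binom{m}{s}\,s!/(m-s)!$ to $m!/(m-s)!$ delivers (\ref{newblock}) directly.

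For the two conditional statements the plan is to perform two further marginalizations of (\ref{newblock}). Using the definition of the generalized central Stirling numbers $S^{-1,-\alpha}_{s,k}$ recalled just before Proposition 1, one has
\[
\sum_{(s_1,\dots,s_k)} \frac{1}{s_1!\cdots s_k!}\prod_{i=1}^k (1-\alpha)_{s_i-1} \;=\; \frac{k!}{s!}\,S^{-1,-\alpha}_{s,k},
\]
so that summing (\ref{newblock}) over $(s_1,\dots,s_k)$ yields the joint law of $(S_m,K_m)$, and a further sum over $k=0,\dots,s$ then produces the marginal of $S_m$. Dividing (\ref{newblock}) by each of these two marginals in turn reproduces (\ref{uffa_2}) and (\ref{uffa_3}) respectively; in the latter the ratio $V_{n+m,j+k}/V_{n,j}$ cancels against the analogous factor in the denominator, which is precisely the mechanism by which conditioning on $K_m$ erases dependence on the particular Gibbs weights.

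The only genuine obstacle is the multivariate Chu--Vandermonde identity above; it can be established either by induction on $j$ from the classical two-variable rising-factorial Chu--Vandermonde identity, or by reading off the coefficient of $x^{m-s}$ on both sides of $\prod_{i=1}^j (1-x)^{-(n_i-\alpha)} = (1-x)^{-(n-j\alpha)}$. Once that identity is in hand the rest of the derivation reduces to straightforward algebra with multinomial coefficients.
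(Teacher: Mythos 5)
Your proposal is correct and follows essentially the same route as the paper: marginalize the conditional EPPF \eqref{oldenew} over the allocations $(m_1,\dots,m_j)$ of the $m-s$ observations in old blocks via the rising-factorial multinomial identity (which is exactly Eq.~\eqref{multi} in the paper's Appendix, so it needs no separate proof), and multiply by the combinatorial coefficient counting the partitions of $[m]$ compatible with the given configuration. The only cosmetic difference is that you derive the marginals of $S_m$ and $(S_m,K_m)$ directly from \eqref{newblock} using the definition of $S_{s,k}^{-1,-\alpha}$, whereas the paper imports them from Lijoi et al.\ (2007, 2008); your version is self-contained and equally valid.
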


\begin{rema} 
%Notice that the general form of the conditional distribution of $K_m$ under Gibbs priors has been obtained in \cite{lmp07}, while the general conditional law of $S_m$ is in \cite{lpw08}. 
Further results for the conditional moments of any order of $K_m$  and for the conditional asymptotic distribution of a proper normalization of $K_m$  under $(\alpha, \theta)$ Poisson-Dirichlet partition models are in \cite{flp09}. A simplified approach to the posterior analysis of the two-parameter model exploiting the {\it deletion of classes property} and the Beta-Binomial distribution of $S_m|K_n=j$ is in \cite{cer11a}. A general result for {\it conditional $\alpha$ diversity} for Poisson-Kingman partition models driven by the stable subordinator (\cite{pit03}) has been obtained in \cite{cer11b}.

\end{rema}

\begin{rema} Notice that equations (\ref{newblock}), (\ref{uffa_2}), and (\ref{uffa_3}) fix corresponding formulas (9), (19) and (34) in \cite{lpw08} which are missing the combinatorial coefficients. The problem in Lijoi et al. (2008) seems to follow from some confusion between conditional Gibbs EPPFs, as arising from the multistep sequential construction of Proposition 4, and joint conditional distributions of the corresponding random vectors. We stress here that an EPPF provides the probability of a particular partition characterized by a certain allocation in a certain number of blocks with certain multiplicities. This differs from the probability of the random vector of the multiplicities to assume that specific value, which is obtained by summing over all different partitions providing the same multiplicities in the same number of blocks. The results in the following sections show that once the corrected formulas for the joint conditional distribution are properly identified, the derivation of estimators for quantities of interest  in Bayesian nonparametric  species sampling modeling simply follows by working with joint conditional marginals of  (\ref{condmulti}).
\end{rema}

The first step is to define the conditional analog of (\ref{prior}).
\begin{defn} Under a general $(\alpha, V)$ Gibbs partition model, the multivariate distribution of the vector $(S_1, \dots, S_{K_m}, K_m)$, which arises by marginalizing (\ref{newblock}) with respect to $S_m$,   
\begin{equation}
\label{condmulti}
\mathbb{P}_{\alpha, V} (S_1=s_1, \dots, S_{K_m}=s_k, K_m=k| {\bf n})=
\end{equation}
$$= \frac{m!}{s_1! \cdots s_{k}! k! } \frac{V_{n+m, j+k}}{V_{n,j}}  \sum_{s=k}^{m} \frac{(n-j\alpha)_{m-s}}{(m-s)!} \prod_{i=1}^k (1 -\alpha)_{s_i -1}
$$
for $(s_1, \dots, s_{k}): \sum_j{s_j} \in [k, m]$, $k \in [1, m]$, is termed {\it conditional multivariate Gibbs distribution of parameters $(m, \alpha, j, n)$}, for $m \geq 1$, $\alpha \in (-\infty, 1)$ and $j \leq n$.
\end{defn}

In the next Proposition, mimicking the technique adopted in the previous section for the unconditional case, we derive marginals of (\ref{condmulti}) as the tools to obtain joint conditional falling factorial moments of the {\it conditional Gibbs sampling formula}. For $W_{l,m}= \sum_{i=1}^{K_m} 1 \{S_i=l\}$ this is given by the usual change of variable in (\ref{condmulti}), hence
\begin{equation}
\label{condsampl}
\mathbb{P}(W_{1,m}=w_1, \dots, W_{m,m}=w_m|n_1, \dots, n_j)= 
\end{equation}
$$=\frac{m!V_{n+m, j+k}}{V_{n,j}} \sum_{s=k}^{m} \frac{(n-j\alpha)_{m-s}}{(m-s)!} \prod_{i=1}^s
\frac{[(1-\alpha)_{i-1}]^{w_i}}{(i!)^{w_i} w_i!}.
$$
In what follows we will resort to the convolution relation which defines {\it non-central} generalized Stirling numbers in terms of {\it central} generalized Stirling numbers
\begin{equation}
\label{convo_1}
S_{n,k}^{-1, -\alpha, \gamma}= \sum_{s=k}^{n} {n \choose s} S_{s,k}^{-1, -\alpha} (-\gamma)_{n-s},
\end{equation}
see the Appendix (cfr. (\ref{convo})) for further details. 
\begin{prop} 
\label{marg_new_cond}
Under a general $(\alpha, V)$ Gibbs partition model the r-dimensional marginal of (\ref{condmulti}), for $(s_1, \dots, s_r): \sum_i s_i \leq s \leq m$ and $0 \leq k-r \leq m -\sum_{i=1}^r s_i$, is given by
\begin{equation}
\label{margnew}
\mathbb{P} (S_1=s_1, \dots, S_r=s_r, K_m= k| {\bf  n} )=
\end{equation}
$$
= \frac{m! [\prod_{i=1}^r (1-\alpha)_{s_i-1}]}{\prod_{i=1}^r s_i!  (m-\sum_{i=1}^r s_i)!} \frac{(k-r)!}{ k!} \frac{V_{n+m, j+k}}{V_{n,j}} S_{m -\sum_{i=1}^r s_i, k- r}^{-1, -\alpha, -(n-j\alpha)}.
$$
\end{prop}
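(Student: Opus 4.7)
The plan is to mimic the derivation used for Proposition 1 in the unconditional case. Starting from the joint conditional (\ref{condmulti}), I would fix $(s_1, \dots, s_r)$ and $k$, then sum over all admissible configurations $(s_{r+1}, \dots, s_k)$ with each $s_i \geq 1$. Setting $u = \sum_{i=1}^r s_i$ and $v = \sum_{i=r+1}^k s_i$, the factors of (\ref{condmulti}) that depend on the summation variables are $\prod_{i=r+1}^k (1-\alpha)_{s_i-1}/s_i!$ together with the piece $(n-j\alpha)_{m-u-v}/(m-u-v)!$ coming from the $s$-dependent factor of the conditional multivariate Gibbs distribution.

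Next I would group compositions by the value of $v$ (ranging from $k-r$ to $m-u$) and invoke the definition of central generalized Stirling numbers recalled before Proposition 1,
\[
\sum_{\substack{s_{r+1}+\cdots+s_k=v \\ s_i \geq 1}} \prod_{i=r+1}^k \frac{(1-\alpha)_{s_i-1}}{s_i!} = \frac{(k-r)!}{v!}\, S^{-1,-\alpha}_{v,\, k-r},
\]
so that the multi-dimensional sum collapses to a single sum over $v$ with summand $\binom{m-u}{v}\,(n-j\alpha)_{m-u-v}\,S^{-1,-\alpha}_{v,\, k-r}$ times a common prefactor.

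The final step is to recognise the residual sum as a non-central generalized Stirling number via the convolution identity (\ref{convo_1}) with $\gamma = -(n-j\alpha)$:
\[
\sum_{v=k-r}^{m-u} \binom{m-u}{v}\, (n-j\alpha)_{m-u-v}\, S^{-1,-\alpha}_{v,\, k-r} = S^{-1,-\alpha,\,-(n-j\alpha)}_{m-u,\; k-r}.
\]
Reassembling the surviving prefactor $m!\,V_{n+m,j+k}/[V_{n,j}\,k!\,(m-u)!] \cdot \prod_{i=1}^r (1-\alpha)_{s_i-1}/s_i!$ together with the $(k-r)!$ produced by the Stirling identity yields exactly the stated expression.

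The main obstacle is purely bookkeeping: keeping the range $v \in [k-r,\, m-u]$ consistent with the admissibility condition $0 \leq k-r \leq m - \sum_{i=1}^r s_i$, and correctly tracking the $(k-r)!/v!$ combinatorial factor produced by the Stirling number definition, so that the leftover single sum matches (\ref{convo_1}) with exactly the right arguments. No new combinatorial identity beyond those already employed in Proposition 1 is needed; the convolution (\ref{convo_1}) does all the structural work.
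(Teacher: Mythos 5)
Your proposal is correct and follows essentially the same route as the paper's own proof: marginalize over the remaining block sizes grouped by their total $v$, collapse the composition sum via the definition of $S^{-1,-\alpha}_{v,k-r}$ (producing the $(k-r)!/v!$ factor), and then absorb the residual single sum over $v$ into the non-central Stirling number through the convolution identity (\ref{convo_1}). The bookkeeping of the prefactors and the range $v\in[k-r,m-u]$ works out exactly as you describe.
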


\begin{proof}
Multiplying and dividing (\ref{condmulti}) by $(s-\sum_{i=1}^r s_i)!$ and $(m-\sum_{i=1}^r s_i)!$ and marginalizing yields
$$
\mathbb{P} (S_1=s_1, \dots, S_r=s_r, K_m= k| {\bf  n} )=\frac{m! [\prod_{i=1}^r (1-\alpha)_{s_i-1}]}{\prod_{i=1}^r s_i!  (m-\sum_{i=1}^r s_i)!} \frac{ 1}{ k! } \frac{V_{n+m, j+k}}{V_{n,j}} \times
$$
$$\times \sum_{s-\sum_{i=1}^r s_i= k-r}^{m-\sum_{i=1}^r s_i} \frac{(m-\sum_{i=1}^r s_i)! (n-j\alpha)_{m-s}}{(s-\sum_{i=1}^r s_i)! (m-s)!}  \sum_{(b_1, \dots, b_{k-r})} \frac{ (s-\sum_{i=1}^r s_i)!}{\prod_i b_i!} \prod_{i=1}^{k-r} (1 -\alpha)_{b_i -1}=
$$
further multiplying and dividing by $(k -r)!$ we obtain
$$
=\frac{m! [\prod_{i=1}^r (1-\alpha)_{s_i-1}]}{\prod_{i=1}^r s_i!  (m-\sum_{i=1}^r s_i)!} \frac{(k-r)!}{ k!} \frac{V_{n+m, j+k}}{V_{n,j}} \times
$$
$$
\times \sum_{s-\sum_{i=1}^r s_i= k-r}^{m-\sum_{i=1}^r s_i} {m-\sum_{i=1}^r s_i  \choose s- \sum_{i=1}^r s_i} (n -j\alpha)_{m-s} S_{s-\sum_{i=1}^r s_i, k-r}^{-1, -\alpha},
$$
and the result follows by (\ref{convo_1}).
\end{proof}
The following Proposition generalizes Theorem 2. in \cite{flp12a}. We adopt the notation $W_{l,m}^{(n)}$ to indicate components of (\ref{condsampl})
\begin{prop}
\label{joint_cond2}
Under a general $(\alpha, V)$ Gibbs partition model, joint falling factorial moments of order $(r_1, \dots, r_m)$ of the conditional sampling formula (\ref{condsampl}) arise by an application of (\ref{margnew}) in (\ref{kalea}). For $m- \sum_l lr_l \geq 0$
\begin{equation}
\label{jointnew}
\mathbb{E} \left[ \prod_{l=1}^m (W_{l,m}^{(n)})_{(r_l)}\right]=
\end{equation}
$$
= \frac{m! \prod_{l=1}^m \left[(1-\alpha)_{l-1}\right]^{r_l}}{(m- \sum_{l} l r_l)! \prod_l (l!)^{r_l}} \frac{1}{V_{n,j}} \sum_{k - \sum_l r_l =0}^{m- \sum_l l r_l} V_{n+m, j+k} S_{m -\sum_l l r_l, k- \sum_l r_l}^{-1, -\alpha, -(n-j\alpha)}.
$$
For $r_l=r \leq  \lceil{\frac ml \rceil}$ and $r_j=0$ for $j\neq l$ then 
\begin{equation}
\label{mmnew}
\mathbb{E}[(W_{l,m}^{(n)})_{[r]}]=\frac{m!}{(m-rl)!}  \frac{[(1-\alpha)_{l-1}]^r}{(l!)^r} \frac{1}{V_{n,j}} \sum_{k-r=0}^{m-rl} V_{n+m, j+k} S_{m-rl, k-r}^{-1, -\alpha, -(n- j\alpha)},
\end{equation}
which agrees with the result in Theorem 2. in \cite{flp12a} expressed in terms of non central generalized factorial numbers (cfr. (\ref{coeffsti}) in the Appendix).
\end{prop}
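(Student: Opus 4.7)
The plan is to mirror exactly the proof of Proposition \ref{joint_sampl}, replacing the unconditional marginal (\ref{rmarg}) with the conditional marginal (\ref{margnew}) derived in Proposition \ref{marg_new_cond}. Conditionally on $K_m=k$, write each count as the sum of non-independent indicators $W_{l,m}=\sum_{i=1}^{k} 1\{S_i=l\}$. By the Johnson--Kotz/de Morgan formula for factorial moments of sums of indicators,
$$
\mathbb{E}\bigl[(W_{l,m})_{[r_l]}\mid K_m=k,{\bf n}\bigr]= r_l!\sum_{(a_1,\dots,a_{r_l})}\mathbb{P}(S_{a_1}=l,\dots,S_{a_{r_l}}=l\mid K_m=k,{\bf n}),
$$
and exchangeability of $(S_1,\dots,S_{K_m})$ given $K_m=k$ collapses the sum into $r_l!\binom{k}{r_l}$ copies of a single probability.

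Next I would jointly extend this to the vector $(r_1,\dots,r_m)$, exactly as in the step leading to (\ref{kalea}): the number of ways of choosing disjoint index sets for each size $l$ from $\{1,\dots,k\}$ produces the combinatorial factor $\bigl(\prod_l r_l!\bigr)\,k!/\bigl(\prod_l r_l!\,(k-\sum_l r_l)!\bigr)=k!/(k-\sum_l r_l)!$, so after multiplying by the probability that $K_m=k$,
$$
\mathbb{E}\Bigl[\prod_{l=1}^m (W_{l,m}^{(n)})_{[r_l]}\Bigr]=\sum_{k-\sum_l r_l=0}^{m-\sum_l lr_l}\frac{k!}{(k-\sum_l r_l)!}\,\mathbb{P}\bigl(S_1=1^{(r_1)},\dots,S_{\sum_l r_l}=m^{(r_m)},K_m=k\mid{\bf n}\bigr),
$$
where the entries of the argument are the natural exchangeable reordering used at (\ref{kalea}).

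The final step is to substitute the marginal formula (\ref{margnew}) with $r=\sum_l r_l$ and $\sum_{i=1}^r s_i=\sum_l lr_l$. The factor $(k-r)!/k!$ coming from (\ref{margnew}) cancels exactly against $k!/(k-r)!$ from the combinatorial coefficient, while the product $\prod_{i=1}^r(1-\alpha)_{s_i-1}$ regroups into $\prod_l[(1-\alpha)_{l-1}]^{r_l}$ and the factorial $\prod_i s_i!$ regroups into $\prod_l(l!)^{r_l}$, giving (\ref{jointnew}). The single-size special case (\ref{mmnew}) is just the specialization $r_l=r$, $r_j=0$ for $j\neq l$.

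The only place where care is required is the bookkeeping of the non-central Stirling number $S^{-1,-\alpha,-(n-j\alpha)}_{m-\sum_l lr_l,k-\sum_l r_l}$: this factor is already packaged inside (\ref{margnew}) via the convolution identity (\ref{convo_1}), so no further manipulation of the inner sum over $s$ is needed. Thus the expected main obstacle, namely absorbing the sum over the number of observations $S_m$ falling into new blocks, has already been dispatched in Proposition \ref{marg_new_cond}, and what remains here is essentially bookkeeping of multinomial factors.
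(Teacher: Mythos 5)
Your proposal is correct and is exactly the argument the paper intends: the paper's own proof is a one-line remark that the result follows ``along the same lines'' as Proposition \ref{joint_sampl} by substituting the conditional marginal (\ref{margnew}) for (\ref{rmarg}) in (\ref{kalea}), which is precisely the indicator-decomposition, exchangeability, and bookkeeping you carry out. Your observation that the non-central Stirling number is already packaged by the convolution (\ref{convo_1}) inside Proposition \ref{marg_new_cond}, so that no further manipulation of the sum over $S_m$ is needed, correctly identifies why the conditional case requires no new ideas.
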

\begin{proof}
By the analogy between (\ref{margnew}) and (\ref{rmarg}) the proof moves along the same lines as the proof of Proposition \ref{joint_sampl}, exploiting the marginals obtained in Proposition \ref{marg_new_cond}.
\end{proof}
%\begin{rema}Equation (\ref{jointnew}) generalizes the result ,  to eq. (\ref{mmnew}). Exploiting the marginals The simplicity of the derivation here, with respect to the complexity of the proofs in Favaro et al. (2012), it is al
%\end{rema}
\begin{rema}
Notice the great computational advantage provided by the technique based on marginals of multivariate Gibbs distributions devised in the previous section with respect to the complexity of the approach adopted in \cite{flp12a}. (\ref{jointnew}) immediately follows as the conditional analog of the result obtained in Proposition \ref{joint_sampl} without any need to provide a new proof.  
\end{rema}

In \cite{flp12a}, (cfr. Propositions 6 and 9), explicit marginals of (\ref{condsampl}) have been derived for the Dirichlet $(\theta)$, the $(\alpha, \theta)$ Poisson-Dirichlet and the Gnedin-Fisher $(\gamma)$ (\cite{gne10}) partition models, . In the next Proposition we obtain the general result for the entire Gibbs family thus providing the conditional analog of Proposition \ref{dist_marg_uncon}.  
\begin{prop} Under a general $(\alpha, V)$ Gibbs partition model the marginal distribution of (\ref{condsampl}) for $x=0, \dots, \lceil  m/l \rceil$
%arise by (\ref{mmnew}) and the known relationship for discrete random variables
%$$
%\mathbb{P}(W_{l,m}^{(n)}= x)= \sum_{r \geq 0} \frac{(-1)^r}{x! r!} \mathbb{E}[ %(W_{l,m}^{(n)})_{[x+r]}]
%$$
corresponds to 
\begin{equation}
\label{margcondw}
\mathbb{P}(W_{l,m}^{(n)}= x)= \frac{[(1-\alpha)_{l-1}]^x}{x! (l!)^x}\frac{m!}{V_{n,j}}\times
\end{equation}
$$\times \sum_{r= 0}^{\lceil{\frac ml \rceil}-x}\frac{(-1)^r [(1-\alpha)_{l-1}]^r}{r! (l!)^r (m -rl-xl)!} \sum_{k-r-x=0}^{m-rl-xl} V_{n+m, j+k} S_{m-rl-xl, k-r-x}^{-1, -\alpha, -(n-j\alpha)}.
$$ 
Its expected value, which provides the Bayesian nonparametric estimator under quadratic loss function, for the number of {\it new} species represented $l$ times, arises from (\ref{mmnew}) for $r=1$ 
\begin{equation}
\label{mean_cond}
\mathbb{E}(W_{l,m}^{(n)})={m \choose l} \frac{(1-\alpha)_{l-1}}{V_{n,j}} \sum_{k-1=0}^{m-l} V_{n+m, j+k} S_{m-l, k-1}^{-1, -\alpha, -(n-j\alpha)}.
\end{equation}
The conditional distribution of the number of  new singleton species $W_{1,m}^{(n)}$ will be 
$$
\mathbb{P}(W_{1,m}^{(n)}=x)=\frac{m!}{x!}\frac{1}{V_{n,j}} \sum_{r=0}^{m-x} \frac{(-1)^r}{r! (m-r-x)!} \sum_{k-r-x=0}^{m-r-x} V_{n+m, j+k} S_{m-r-x, k-r-x}^{-1, -\alpha, -(n-j\alpha)}
$$
and a Bayesian estimator of the number of new singleton species follows from (\ref{mean_cond}) as
$$
\mathbb{E}(W_{1,m}^{(n)})= \frac{m}{V_{n,j}} \sum_{k-1=0}^{m-1} V_{n+m, j+k} S_{m-1,k-1}^{-1, -\alpha, -(n-j\alpha)}.
$$
\end{prop}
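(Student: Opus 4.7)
The plan is to mirror the proof of Proposition \ref{dist_marg_uncon} in the conditional setting. The key tool is the standard inversion formula (cfr.\ \eqref{momprob} in the Appendix) that recovers a discrete distribution from its falling factorial moments:
$$\mathbb{P}(X=x) = \frac{1}{x!}\sum_{r\geq 0}\frac{(-1)^r}{r!}\,\mathbb{E}\!\left[(X)_{[x+r]}\right].$$
Applied to $X = W_{l,m}^{(n)}$, with the factorial moment \eqref{mmnew} already obtained in Proposition \ref{joint_cond2}, this immediately produces \eqref{margcondw}: after substituting the closed form for $\mathbb{E}[(W_{l,m}^{(n)})_{[x+r]}]$, I would factor out $[(1-\alpha)_{l-1}]^x/(x!(l!)^x)$ and relabel the outer index so that only the $r$-sum remains, with the non-central Stirling number $S^{-1,-\alpha,-(n-j\alpha)}_{m-(x+r)l,\,k-(x+r)}$ vanishing once $(x+r)l>m$ and hence truncating the range to $r=0,\dots,\lceil m/l\rceil-x$.

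The expected value \eqref{mean_cond} is simply the $r=1$ specialization of \eqref{mmnew}, so no additional argument is required. The singleton results then follow by setting $l=1$: since $(1-\alpha)_{0}=1$ and $(1!)^{r}=1$ the prefactors collapse and the stated formulas for $\mathbb{P}(W_{1,m}^{(n)}=x)$ and $\mathbb{E}(W_{1,m}^{(n)})$ fall out at once.

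The only delicate point is bookkeeping on the summation range: making sure $r$ runs over $0,\dots,\lceil m/l\rceil-x$ and that the inner index $k-r-x$ runs over $0,\dots,m-rl-xl$. These are precisely the support constraints on the compositions underlying the non-central Stirling numbers, and they match the bounds appearing in the unconditional case of Proposition \ref{dist_marg_uncon}. Conceptually, the step from the unconditional to the conditional setting is mechanical once Proposition \ref{joint_cond2} is available, amounting to the replacements $V_{n,k}\mapsto V_{n+m,j+k}/V_{n,j}$ and central Stirling numbers by their non-central counterparts through the convolution \eqref{convo_1}.
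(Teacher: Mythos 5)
Your proposal is correct and follows exactly the paper's own argument: the distribution \eqref{margcondw} is obtained by applying the factorial-moment inversion formula \eqref{momprob} to the moments \eqref{mmnew} of Proposition \ref{joint_cond2}, the mean \eqref{mean_cond} is the $r=1$ case of \eqref{mmnew}, and the singleton formulas are the $l=1$ specializations. Your additional remarks on the truncation of the summation ranges are sound and merely make explicit what the paper leaves implicit.
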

\begin {proof}
(\ref{margcondw}) arises by an application of (\ref{momprob}), (\ref{mean_cond}) follows from (\ref {mmnew}) for $r=1$ and corresponds to Eq. (17) in \cite{flp12a} expressed in terms of generalized non central factorial numbers (cfr. (\ref{coeffsti}) in the Appendix).
\end{proof}
\section{Multivariate P\'olya-Gibbs distributions} In this Section we focus on the conditional random allocation of the additional $m$ integers in the $j$ {\it old} blocks. First we derive the conditional joint distribution of the random vector $(M_{1,m}, \dots, M_{j,m}, S_m)$ of the sizes of the $m-S_m$ observations falling in the $j$ {\it old} blocks and of the total number of {\it new} observations  $S_m$ falling in {\it new} blocks. Then, similarly to the previous sections, we move attention to the corresponding vector of counts and its joint falling factorial moments. From (\ref{oldenew}), marginalizing with respect to the partitions in new blocks, and multiplying for the combinatorial coefficient accounting for the number of allocations providing the same sizes of {\it old} blocks and the same number of total observations in {\it new} blocks, we obtain
\begin{equation}
\label{jointold}
\mathbb{P}(M_{1,m}=m_1, \dots, M_{j,m}=m_j, S_m=s|{n_1, \dots, n_j})= 
\end{equation}
$$=\frac{m!}{\prod_{i=1}^j m_i! s!} \prod_{i=1}^j (n_i -\alpha)_{m_i}\sum_{k=0}^s \frac{V_{n+m, j+k}}{V_{n,j}} S_{s, k}^{-1, -\alpha}, 
$$
for  $m_{i} \geq 0$ for $i=1, \dots, j$ and $\sum_{i=1}^j m_{i}=m -S_m$.
\begin{rema}
Since the number of old blocks is fixed, (\ref{jointold}) may be interpreted as a generalization of {\it multivariate P\'olya distributions}. If $Q_{V_{n,k}}$ is the conditional law,  given $(n_1, \dots, n_j)$, of the vector $(\tilde{P}_{1,n}, \dots, \tilde{P}_{j,n}, R_{j,n})$, for $\tilde{P}_{j,n}= \tilde{P}_j|n_1, \dots, n_j$ the conditional random relative abundance of the $j$-th species to appear, and  $R_{j,n}=1 -\sum_{i=1}^j \tilde{P}_{i,n}$, then (\ref{jointold}) turns out to be a $Q_{V}$-multinomial mixture that we term {\it multivariate P\'olya-Gibbs distribution} of parameters $(n_1-\alpha, \dots, n_j- \alpha, V)$. Moreover $Q_{V}$ will be the limit law, for $m \rightarrow \infty$, of the random vector
$$
\frac{M_{1,m}^{(n)}}{m}, \dots, \frac{M_{j,m}^{(n)}}{m}, \frac{S_m}{m},
$$
where $M_{i, m}^{(n)}$ stands for a component of \eqref{jointold}.
Notice that for the two-parameter Poisson-Dirichlet $(\alpha, \theta)$ model, by a result in \cite{pit96}, (cfr. Sect. 3.7, Corollary 20),
$$(\tilde{P}_{1,n}, \dots, \tilde{P}_{j,n}, R_{j,n})\sim Dir[n_1 -\alpha, \dots, n_j-\alpha, \theta +j\alpha],$$
and substituting $V_{n,k}= (\theta +\alpha)_{k-1 \uparrow \alpha} / (\theta +1)_{n-1}$ in (\ref{jointold}) yields
\begin{equation}
\label{jointoldPD}
\mathbb{P}_{\alpha, \theta}(M_{1,m}=m_1, \dots, M_{j,m}=m_j, S_m=s|{\bf n})= $$
$$
=\frac{m!}{\prod_{i=1}^j m_i! s!} \frac{\prod_{i=1}^j (n_i -\alpha)_{m_i} (\theta +j\alpha)_{s}}{(n +\theta)_{m}}
\end{equation}
which is a proper {\it multivariate P\'olya distribution} of parameters $(m,  n_1-\alpha, \dots, n_j-\alpha, \theta+j\alpha)$.
\end{rema}
Next Proposition provides the general marginal that we need to obtain joint falling factorial moments of the vector of counts corresponding to \eqref{jointold}.
\begin{prop} Under a general $(\alpha, V)$ Gibbs model, the conditional joint marginal distribution of the vector of the sizes $(M_{1,m} \dots, M_{r,m})$ of the  additional {\it new} observations falling in the first $r$ {\it old} blocks corresponds to
\begin{equation}
\label{margvecchi}
\mathbb{P}(M_{1,m}=m_1, \dots, M_{r,m}=m_r|n_1, \dots, n_j)=
\end{equation}
$$=\frac{m! \prod_{i=1}^r (n_i -\alpha)_{m_i} }{\prod_{i=1}^r m_i! (m- \sum_{i=1}^r m_i)!} \sum_{k=0}^{m-\sum_{i=1}^r m_i} \frac{V_{n+m, j+k}}{V_{n,j}} S_{m- \sum_{i=1}^r m_i, k}^{-1, -\alpha, -(n-(j-r)\alpha -\sum_{i=1}^r n_i)}.
$$
\end{prop}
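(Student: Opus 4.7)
My plan is to obtain (\ref{margvecchi}) by directly marginalizing the joint distribution (\ref{jointold}) with respect to $(M_{r+1,m},\ldots,M_{j,m},S_m)$, whose outcomes are constrained by $m_{r+1}+\cdots+m_j+s=q$ with $q:=m-\sum_{i=1}^r m_i$. The factors in (\ref{jointold}) that depend only on $m_1,\ldots,m_r$, namely $m!$ and $\prod_{i=1}^r (n_i-\alpha)_{m_i}/m_i!$, pull outside the sum, leaving a triple summation over $k$, $s$, and the composition $(m_{r+1},\ldots,m_j)$.

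I would then rearrange the remaining summation so that $k$ (number of new blocks) and $s$ (total mass in new blocks) are the outer indices and the innermost sum is over compositions $(m_{r+1},\ldots,m_j)$ of fixed total $q-s$. That innermost sum,
$$\sum_{m_{r+1}+\cdots+m_j=q-s}\prod_{i=r+1}^{j}\frac{(n_i-\alpha)_{m_i}}{m_i!},$$
is exactly the multivariate Chu--Vandermonde identity for rising factorials and evaluates to $\bigl(\sum_{i=r+1}^{j}(n_i-\alpha)\bigr)_{q-s}/(q-s)!$. Using $\sum_{i=1}^{j}n_i=n$ this becomes $\bigl(n-\sum_{i=1}^{r}n_i-(j-r)\alpha\bigr)_{q-s}/(q-s)!$.

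What is left inside the sum over $k$ is
$$\sum_{s=k}^{q}\binom{q}{s}\,S_{s,k}^{-1,-\alpha}\,\bigl(n-\sum_{i=1}^{r}n_i-(j-r)\alpha\bigr)_{q-s},$$
which is precisely the convolution (\ref{convo_1}) defining the generalized non-central Stirling number $S_{q,k}^{-1,-\alpha,\,-(n-(j-r)\alpha-\sum_{i=1}^{r}n_i)}$. Reassembling the pieces and substituting back $q=m-\sum_{i=1}^{r}m_i$ produces exactly (\ref{margvecchi}).

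The only non-routine ingredient is recognizing the composition sum as Chu--Vandermonde for rising factorials; once that is in hand, the convolution identity for non-central Stirling numbers finishes the argument in the same spirit as Proposition \ref{marg_new_cond}, with no added complication from the dependence on the old-block sizes $n_1,\ldots,n_r$. A useful sanity check is the boundary case $r=j$, where the composition sum collapses to $\delta_{q,s}$ and the third Stirling parameter vanishes, recovering the unmarginalized formula (\ref{jointold}).
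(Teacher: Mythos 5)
Your proposal is correct and follows essentially the same route as the paper: the paper first collapses the sum over $(m_{r+1},\dots,m_j)$ at fixed $S_m=s$ via the multinomial theorem for rising factorials (Eq.~(\ref{multi}), your Chu--Vandermonde step, yielding the factor $(n-j\alpha-\sum_{i=1}^r n_i+r\alpha)_{m-s-\sum_i m_i}$), and then marginalizes over $s$ and invokes the convolution (\ref{convo_1}) exactly as you do. The only difference is the order in which the sums are written out, which is immaterial.
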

\begin{proof}By (\ref{jointold}), the jont marginal of the first $r$ blocks and  $S_m$ is easily obtained as
$$
\mathbb{P}(M_{1,m}=m_1, \dots, M_{r,m}=m_r, S_m=s|{\bf n})=\frac{m!}{\prod_{i=1}^r m_i!  (m -s-\sum_{i=1}^r m_i)! s!}\times $$
$$\times \prod_{i=1}^r (n_i -\alpha)_{m_i} (n- j\alpha - \sum_{i=1}^r n_i +r\alpha)_{m-s-\sum_{i=1}^r m_i} \sum_{k=0}^s \frac{V_{n+m, j+k}}{V_{n,j}} S_{s, k}^{-1, -\alpha},
$$
marginalizing with respect to $S_m$, and multiplying and dividing by $(m -\sum_{i=1}^r m_i)!$ yields 
$$
\mathbb{P}(M_{1,m}=m_1, \dots, M_{r,m}=m_r|n_1, \dots, n_j)
=\sum_{s=0}^{m -\sum_{i=1}^r m_i} {m - \sum_{i=1}^r m_i \choose s} \times$$
$$\times \frac{m! \prod_{i=1}^r (n_i -\alpha)_{m_i} }{\prod_{i=1}^r m_i! (m- \sum_{i=1}^r m_i)!} (n- j\alpha - \sum_{i=1}^r n_i +r\alpha)_{m-s-\sum_{i=1}^r m_i} \sum_{k=0}^s \frac{V_{n+m, j+k}}{V_{n,j}} S_{s, k}^{-1, -\alpha}=
$$
which reduces to
$$
=\frac{m! \prod_{i=1}^r (n_i -\alpha)_{m_i} }{\prod_{i=1}^r m_i! (m- \sum_{i=1}^r m_i)!} \sum_{k=0}^{m-\sum_{i=1}^r m_i} \frac{V_{n+m, j+k}}{V_{n,j}}\sum_{s=k}^{m-\sum_{i=1}^{r} m_i} {m - \sum_{i=1}^r m_i \choose s}  \times
$$
$$
\times(n- j\alpha - \sum_{i=1}^r n_i +r\alpha)_{m-s-\sum_{i=1}^r m_i}S_{s, k}^{-1, -\alpha},
$$
and the result follows by an application of (\ref{convo_1}).
\end{proof}
Now let $O_{l,m}^{(n)}=\sum_{i: n_i \leq l} 1\{n_i+M_{i,m}=l| n_1, \dots, n_j\}$, for $l=1, \dots, n+m$, be the number of {\it old} blocks of size $l$ after the allocation of the additional $m$-sample,  then,  to obtain the joint falling factorial moments of any order for the sampling formula of (\ref{jointold}) we exploit the multivariate version of the result (\ref{momentr}) recalled in the proof of Proposition 2, namely
\begin{equation}
\label{johgen}
\mathbb{E}\left[(O_{l,m}^{(n)})_{[r]}\right]= r! \sum_{(\xi_1, \dots, \xi_r)} \mathbb{P} (M_{\xi_1}= l-n_1, \dots, M_{\xi_r}=l-n_r).
\end{equation}
For  $m_i=l-n_i$, (\ref{margvecchi}) specializes as
\begin{equation}
\label{oldmarg}
\mathbb{P}(M_{1,m}=l-n_1, \dots, M_{r,m}=l-n_r|n_1, \dots, n_j)=
\end{equation}
$$=\frac{m! \prod_{i=1}^r (n_i -\alpha)_{l -n_i} }{\prod_{i=1}^r (l-n_i)! (m- rl +\sum_{i=1}^r n_i)!}\frac{1}{V_{n,j}} \times
$$
$$\times \sum_{k=0}^{m-lr + \sum_{i=1}^r n_i} V_{n+m, j+k} S_{m- lr +\sum_{i=1}^r n_i, k}^{-1, -\alpha, -(n-(j-r)\alpha -\sum_{i=1}^r n_i)},
$$
and the one-dimensional marginal of (\ref{oldmarg}) corresponds to 
\begin{equation}
\label{onemargold}
\mathbb{P}(M_{i,m}=l-n_i|{\bf n})= {m \choose {l-n_i}}\frac{ (n_i-\alpha)_{l-n_i}}{V_{n,j}} \sum_{k=0}^{m-l+n_i} V_{n+m, j+k} S_{m-l +n_i,k}^{-1, -\alpha, -(n-j\alpha+\alpha -n_i)}.
\end{equation}

The following result easily follows from (\ref{johgen}) as the analog of Propositions \ref{joint_sampl} \and \ref{joint_cond2}.

%\begin{equation}
%\label{samplold}
%\mathbb{P}(O_{1,m}^{(n)}=o_1, \dots, O_{n+m}^{(n)}=o_{n+m})= 
%\end{equation}

\begin{prop} Under a general $(\alpha, V)$ Gibbs model, the joint falling factorial moments of the vector of the number of old blocks of different size $(O_{1,m}^{(n)}, \dots, O_{n+m,m}^{(n)})$, after the allocation of the additional $m$-sample, given the initial allocation  $n_1, \dots, n_k$ is given by
$$
\mathbb{E}\left[ (\prod_{l=1}^{n+m}(O_{l,m}^{(n)})_{[r_l]})\right]= 
$$
$$
=\prod_{l=1}^{n+m} r_l! \sum_{({\bf \Xi}_{r_1}, \dots, {\bf \Xi}_{r_{n+m}})} \frac{m! \prod_{l=1}^{n+m} \prod_{i=1}^{r_l} (n_{\xi_i}- \alpha)_{l-n_{\xi_i}}}{\prod_{l=1}^{n+m} \prod_{i=1}^{r_l} (l- n_{\xi_i})! (m -\sum_l lr_l + \sum_l \sum_{i=1}^{r_l} n_{\xi_i})!} \times
$$
$$
\times\sum_{k=0}^{m -\sum_l l r_l + \sum_l \sum_{i=1}^{r_l} n_{\xi_i}} \frac{V_{n+m, j+k}}{V_{n,j}} S_{m -\sum_l l r_l+ \sum_l \sum_{i=1}^{r_l} n_{\xi_i}, k}^{-1, -\alpha, -(n -(j -\sum_l r_l)\alpha - \sum_l \sum_i n_{\xi_i} )},
$$
for ${\bf \Xi}_{r_1}=(\xi_1, \dots, \xi_{r_1}), \dots, {\bf \Xi}_{r_{n+m}}= (\xi_{\sum_l r_l - r_{n+m}}, \dots, \xi_{\sum_{l=1}^{n+m} r_l}$), $\xi_i: n_{\xi_i} \leq l$, and each ${\bf \Xi}_{r_l}$ ranges over all the combinations of $r_l$ elements of $j$. For $r_l=r$ and $r_j=0$ for $j\neq l$, then
\begin{equation}
\label{momoldcond}
\mathbb{E}\left[(O_{l,m}^{(n)})_{[r]}\right]= r! \sum_{(\xi_1, \dots, \xi_r)}\frac{m! \prod_{i=1}^r (n_{\xi_i} -\alpha)_{l -n_{\xi_i}} }{\prod_{i=1}^r (l-n_{\xi_i})! (m- rl+ \sum_{i=1}^r n_{\xi_i})!} \times
\end{equation}
$$
\times\sum_{k=0}^{m-lr +\sum_{i=1}^r n_{\xi_i}} \frac{V_{n+m, j+k}}{V_{n,j}} S_{m- lr +\sum_{i=1}^r n_{\xi_i}, k}^{-1, -\alpha, -(n-(j-r)\alpha -\sum_{i=1}^r n_{\xi_i})}
$$
for $\xi_i:n_{\xi_i} \leq l$, which agrees with the result in Theorem 1. in \cite{flp12a}.
\end{prop}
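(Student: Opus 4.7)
The plan is to mimic the strategies of Propositions \ref{joint_sampl} and \ref{joint_cond2}, but applied to the indicator representation stated just before the proposition, namely $O_{l,m}^{(n)}=\sum_{i:\,n_i\leq l}\mathbf{1}\{n_i+M_{i,m}=l\}$. This recasts each $O_{l,m}^{(n)}$ as a sum of (non-independent) Bernoulli indicators, which is exactly the structure needed to invoke (\ref{johgen}).

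First I would apply the multivariate extension of the Johnson--Kotz identity used in the proof of Proposition \ref{joint_sampl}: expanding $\prod_l (O_{l,m}^{(n)})_{[r_l]}$ into a sum of products of indicators and taking expectations gives
$$
\mathbb{E}\Bigl[\prod_{l=1}^{n+m}(O_{l,m}^{(n)})_{[r_l]}\Bigr]=\prod_{l=1}^{n+m}r_l!\sum_{({\bf \Xi}_{r_1},\dots,{\bf \Xi}_{r_{n+m}})}\mathbb{P}\Bigl(\bigcap_l\bigcap_{\xi_i\in{\bf \Xi}_{r_l}}\{M_{\xi_i,m}=l-n_{\xi_i}\}\,\Big|\,{\bf n}\Bigr),
$$
where each ${\bf \Xi}_{r_l}$ is an $r_l$-combination of indices with $n_{\xi_i}\leq l$ and the different ${\bf \Xi}_{r_l}$ are pairwise disjoint (so that no old block is simultaneously assigned two different final sizes). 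The combinatorial factor $\prod_l r_l!$ is the usual one coming from the falling factorials.

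Second, the joint probability under the sum is precisely the multi-block marginal (\ref{margvecchi}) specialized at $m_{\xi_i}=l-n_{\xi_i}$ for each $\xi_i\in{\bf \Xi}_{r_l}$; this is exactly the single-$l$ version recorded as (\ref{oldmarg}), but now evaluated at the larger collection of blocks $\bigcup_l{\bf \Xi}_{r_l}$. Substituting the closed form and repackaging the rising factorials in the numerator and the factorials in the denominator produces the claimed expression. The specialization (\ref{momoldcond}) is immediate by keeping only $r_l=r$ and setting $r_j=0$ for $j\neq l$, which collapses the outer product and the outer sum over tuples to a single sum over $r$-combinations of $\{1,\dots,j\}$ restricted to $n_{\xi_i}\leq l$.

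The only real obstacle is bookkeeping. One has to verify that, when (\ref{margvecchi}) is specialized simultaneously across all the ${\bf \Xi}_{r_l}$, the three parameters of the non-central Stirling number line up correctly: the first index becomes $m-\sum_l lr_l+\sum_l\sum_i n_{\xi_i}$ (total mass of the additional sample not absorbed by the selected blocks), and the non-central shift $-(n-(j-\sum_l r_l)\alpha-\sum_l\sum_i n_{\xi_i})$ reflects that $\sum_l r_l$ old blocks have been singled out, so only $j-\sum_l r_l$ of the $-j\alpha$ and only $n-\sum_l\sum_i n_{\xi_i}$ of the initial mass remain available to receive the unallocated portion of the new sample. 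Once this alignment is checked the formula follows without further computation.
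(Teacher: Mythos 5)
Your proof is correct and follows essentially the same route as the paper: the multivariate version of the Johnson--Kotz identity \eqref{johgen} reduces the joint falling factorial moment to a sum of joint probabilities of the events $\{M_{\xi_i,m}=l-n_{\xi_i}\}$, which are then read off from the marginal \eqref{margvecchi} specialized at $m_{\xi_i}=l-n_{\xi_i}$, i.e.\ \eqref{oldmarg}. Your explicit observation that the combinations ${\bf \Xi}_{r_l}$ must be pairwise disjoint (so no old block is assigned two final sizes) is a useful point the paper leaves implicit.
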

Next Proposition generalizes the results in Proposition 5 (two-parameter Poisson-Dirichlet case) and Proposition 9 (one parameter Gnedin-Fisher case \cite{gne10}) in \cite{flp12a} to the entire $(\alpha, V)$ Gibbs family.

\begin{prop} Under a general $(\alpha, V)$ Gibbs model, from (\ref{momoldcond}) and (\ref{momprob}), the conditional marginal law of  $O_{l,m}^{(n)}$ is given by 
$$
\mathbb{P}(O_{l,m}^{(n)}=y)= \sum_{r=0}^{\lceil{\frac{m -rl +\sum_{i=1}^{r+y} n_{\xi_i}}{l} \rceil} -y} \frac{(-1)^r (r+y)!}{y! r!} \frac{1}{V_{n,j}} \times
$$
$$\times \sum_{(\xi_1, \dots, \xi_{r+y})} \frac{m!}{\prod_{i=1}^{r+y} (l-n_{\xi_i})! (m -rl-yl+ \sum_{i=1}^{r+y} n_{\xi_i})! }{\prod_{i=1}^{r+y} (n_{\xi_i}- \alpha)_{l-n_{\xi_i}}}\times
$$
$$
\times \sum_{k=0}^{m- rl-ly+\sum_{i=1}^{r+y} n_{\xi_i}}  V_{n+m, j+k} S_{m-lr-ly+\sum_{i=1}^{r+y} n_{\xi_i}, k}^{-1, -\alpha, -(n-(j-r-y)\alpha- \sum_{i=1}^{r+y} n_{\xi_i})}.
$$
Its expected value, which plays the role of the Bayesian nonparametric estimator, under quadratic loss function, of the number of old species represented $l$ times, follows from (\ref{onemargold}) as
\begin{equation}
\label{old_marg}
\mathbb{E}(O_{l,m}^{(n)})= 
\mathbb{E}\left({\sum_{i:n_i \leq l} 1(n_i+M_{i,m}=l|n_1, \dots, n_j)}\right)=
\end{equation}
$$
= \sum_{i: n_i \leq l} \mathbb{E}(1(M_{i,m}=l-n_i|n_1, \dots, n_j))= \sum_{i:n_i \leq l} \mathbb{P}(M_{i,m}=l-n_i|n_1, \dots, n_j)= $$
$$=\sum_{i:n_i \leq l} {m \choose {l-n_i}} \frac{(n_i-\alpha)_{l-n_i}}{V_{n,j}} \sum_{k=0}^{m-l+n_i} V_{n+m,j+k} S_{m-l+n_i, k}^{-1, -\alpha, -(n -j\alpha+\alpha-n_i)},
$$
or from \eqref{momoldcond} for $r=1$ and agrees with Eq. (15) in \cite{flp12a}.
%$$
%=\sum_{x=1}^{l}x {m \choose {l-x}} \frac{(x-\alpha)_{l-x}}{V_{n,j}} \sum_{k=0}^{m-l+x} V_{n+m,j+k} S_{m-l+x, k}^{-1, -\alpha, -(n -j\alpha+\alpha-x)}
%$$
\end{prop}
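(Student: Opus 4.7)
The statement has two parts: a closed form for the probability mass function $\mathbb{P}(O_{l,m}^{(n)}=y)$ and a formula for $\mathbb{E}(O_{l,m}^{(n)})$. I would tackle the expectation first, since it is the quicker consequence, and then derive the full distribution by moment inversion, following the blueprint already used in Propositions~\ref{dist_marg_uncon} and~5 of the paper.

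For the expected value, the plan is to start from the definition
$$O_{l,m}^{(n)}=\sum_{i:\,n_i\le l} 1\{n_i+M_{i,m}=l\mid n_1,\dots,n_j\}$$
and apply linearity of expectation. Each term collapses to $\mathbb{P}(M_{i,m}=l-n_i\mid\mathbf{n})$, which is exactly the one-dimensional marginal computed in~(\ref{onemargold}). Summing over indices $i$ with $n_i\le l$ produces the stated formula immediately; as a sanity check, the same answer comes out of~(\ref{momoldcond}) specialized to $r=1$, so the derivation is internally consistent with the falling factorial moment machinery.

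For the probability mass function, the plan is to invoke the standard moment-inversion identity~(\ref{momprob}), which expresses $\mathbb{P}(X=y)$ as the alternating sum $\sum_{r\ge 0}\frac{(-1)^r}{r!\,y!}\mathbb{E}[(X)_{[r+y]}]$, and substitute the $(r+y)$-th falling factorial moment of $O_{l,m}^{(n)}$ obtained by setting $r\mapsto r+y$ in~(\ref{momoldcond}). The factor $(r+y)!/(y!\,r!)$ appearing in the target expression is exactly what arises when the $r!$ from~(\ref{momoldcond}) is combined with the $1/r!\,y!$ from~(\ref{momprob}); the inner sum over $r$-combinations $(\xi_1,\dots,\xi_r)$ is replaced by a sum over $(r+y)$-combinations $(\xi_1,\dots,\xi_{r+y})$, and the Stirling-type factor simply tracks along.

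The main obstacle I anticipate is not conceptual but notational: the summation ranges need careful bookkeeping. One must check that the $\xi$-tuples only involve indices with $n_{\xi_i}\le l$ so that the factorials $(l-n_{\xi_i})!$ and the rising factorials $(n_{\xi_i}-\alpha)_{l-n_{\xi_i}}$ remain well defined, and that the outer range $\lceil(m-rl+\sum n_{\xi_i})/l\rceil-y$ correctly enforces the underlying constraint $(r+y)l\le m+\sum_{i=1}^{r+y}n_{\xi_i}$ carried over from~(\ref{momoldcond}). Once these indices are correctly aligned the derivation is essentially mechanical, and no new probabilistic input beyond the moment formula and the Newton-type inversion~(\ref{momprob}) is required.
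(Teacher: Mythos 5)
Your proposal is correct and follows essentially the same route as the paper: the expected value is obtained by linearity of expectation over the indicators $1\{n_i+M_{i,m}=l\}$ using the one-dimensional marginal \eqref{onemargold} (equivalently \eqref{momoldcond} with $r=1$), and the probability mass function follows by substituting the $(r+y)$-th falling factorial moment from \eqref{momoldcond} into the inversion formula \eqref{momprob}, which produces the coefficient $(-1)^r(r+y)!/(y!\,r!)$ and the sum over $(r+y)$-combinations exactly as stated. Your attention to the constraint $n_{\xi_i}\le l$ and to the truncation of the outer sum is consistent with the paper's bookkeeping, so no further argument is needed.
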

\begin{rema} Relying on the technique presented in this paper, falling factorial $r$-th moments of $Z_{l,m}^{(n)}$, the total number of {\it old} and {\it new} blocks of size $l$ after the allocation of the additional $m$-sample, as derived in Th. 3 in \cite{flp12a} by means of a very complex procedure, may be obtained in a straightforward way by the full conditional joint distribution 
$$
\small{\mathbb{P}(S_1=s_1, \dots, S_k=s_k, S_{m}=s, K_m=k, { M}_{1, m}={m}_{1}, \dots, M_{j,m}=m_{j}| {\bf n})=}
$$
$$
=\frac{m!}{\prod_{i=1}^k s_i! k! \prod_{i=1}^j m_i! s!} \frac{V_{n+m, j+k}}{V_{n,j}} \prod_{i=1}^k (1 -\alpha)_{s_i-1}\prod_{i=1}^j (n_i-\alpha)_{m_i}.
$$
Multiplying for the way to choose $t$ blocks among the {\it old} and $r-t$ among the {\it new} for every $t$, from (\ref{mmnew}) and (\ref{momoldcond}) we get
$$
\mathbb{E}\left[(Z_{l,m}^{(n)})_{[r]}\right]=
$$
$$=\sum_{t=0}^r {r \choose t} t! \sum_{(\xi_{i_1}, \dots, \xi_{i_t})}\frac{m!  [(1 -\alpha)_{l-1}]^{r-t} \prod_{i=1}^t (n_{\xi_i}-\alpha)_{l-n_{\xi_i}}  }{\prod_{i=1}^t (l-n_{\xi_i})! (l!)^{r-t} (m - tl +\sum_{i=1}^t n_{\xi_i} - (r-t)l)!} \times
$$
$$\times \sum_{k-r+t=0}^{m - rl +\sum n_{\xi_i}} \frac{V_{n+m, k+j}}{V_{n,j}} S_{m - rl +\sum n_{\xi_i}, k - r+t}^{-1, -\alpha, -(n -j\alpha -\sum n_{\xi_i} + t\alpha)}
$$
which agrees with Theorem 3. in \cite{flp12a}.
\end{rema}

In the next section we provide one more example of the importance of working with marginals of conditional multivariate Gibbs distributions in the implementation of the Bayesian nonparametric approach to species sampling problems under Gibbs priors. 

\section {Bayesian nonparametric estimation of the probability to observe a species of a certain size}
In species sampling problems, particularly in ecology or genomics, given a basic sample $(n_1, \dots, n_j)$, interest may be in estimating the {\it probability} to observe at step $n+m+1$ a species already represented $l$ times both belonging to an {\it old} species or to a {\it new} species eventually arising in the $m$-additional sample which is still not observed. This is the topic of a recent paper by Favaro {\it et al.} (2012b) and can be seen as a generalization of the problem of estimating the {\it discovery probability}, i.e. the probability to discover a {\it new} species, not represented in the previous $n+m$ observations.  A Bayesian nonparametric estimator of the discovery probability under general $(\alpha, V)$ Gibbs partition models has been first derived in \cite{lmp07}.

In this Section we show that working with marginals of conditional Gibbs multivariate distributions  greatly simplifies the derivation of the results obtained in \cite{flp12b}, thus providing another example of the importance of the technique proposed in this paper. \\

First recall that by sequential construction of exchangeable partitions, the probability to observe an {\it old} species observed $l$ times in the basic $n$-sample at observation $n+1$, easily follows by one-step prediction rules for general Gibbs EPPFs (see e.g. \cite{pit06}). For $c_{l,n}= \sum_{i=1}^j 1\{n_i=l\}$, for $l=1, \dots, n$ then
$$
p_{l,n}(n_1, \dots, n_j)=c_{l,n}\frac{p(n_1, \dots, l+1, \dots, n_j)}{p(n_1, \dots, l , \dots, n_j)}=c_{l,n} \frac{V_{n+1, j}}{V_{n,j}} (l -\alpha).
$$ 

Given a basic sample $(n_1, \dots, n_j)$, but assuming as in \cite{flp12b} an intermediate $m$-sample still to be observed, the probability to observe a species represented $l$ times among {\it new} species at observation $n+m+1$ will be a random variable, namely
\begin{equation}
\label{newelle}
P^{n+m+1}_{new,l}(\alpha, V)=\frac{V_{n+m+1, j+K_m}}{V_{n+m, j +K_m}} (l- \alpha)W_{l, m}^{(n)},
\end{equation}
for $K_m$ the random number of {\it new} species induced by the additional sample and $W_{l,m}^{(n)}$ the random number of species represented $l$ times in the additional sample given the basic sample. 

In the following Proposition we show how the Bayesian nonparametric estimator, under quadratic loss function, of (\ref{newelle}), (cfr. Theorem 2. in  \cite{flp12b}), may be obtained in few elegant steps.

\begin{prop}Under a general $(\alpha, V)$ Gibbs partition model, for $W_{l,m}^{(n)}= \sum_{i=1}^{K_m}1 \{S_i=l|K_n=j\}$ the Bayesian nonparametric estimator of $P^{m+n+1}_{new,l}(\alpha, V)$ is given by 
\begin{equation}
\label{newdisco}
\mathbb{E}_{(S_1,\dots, S_{K_m}, K_m| K_n=j)} \left( \frac{V_{n+m+1, j+K_m}}{V_{n+m, j +K_m}} (l- \alpha)W_{l, m}^{(n)}\right)=  
$$
$$
=(l-\alpha) \sum_{k-1=0}^{m-l} \frac{V_{n+m+1, j+k}}{V_{n,j}} {m \choose l} (1 -\alpha)_{l-1} S_{m-l, k-1}^{-1, -\alpha, -(n-j\alpha)}.
\end{equation}
\end{prop}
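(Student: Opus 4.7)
The plan is to exploit the indicator decomposition $W_{l,m}^{(n)} = \sum_{i=1}^{K_m} 1\{S_i = l\}$ already used repeatedly in the paper, combined with the marginal formula of Proposition \ref{marg_new_cond}. The key observation is that the random ratio $V_{n+m+1, j+K_m}/V_{n+m, j+K_m}$ depends on $(S_1,\dots,S_{K_m},K_m)$ only through $K_m$, so conditioning on $\{K_m=k\}$ separates this factor cleanly from $W_{l,m}^{(n)}$.

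First I would factor the constant $(l-\alpha)$ and write, by linearity over the partition $\{K_m=k\}$,
\begin{equation*}
\mathbb{E}\!\left(\frac{V_{n+m+1,j+K_m}}{V_{n+m,j+K_m}}(l-\alpha)W_{l,m}^{(n)}\right)= (l-\alpha)\sum_{k\geq 1}\frac{V_{n+m+1,j+k}}{V_{n+m,j+k}}\,\mathbb{E}\!\left(W_{l,m}^{(n)}\mathbf{1}\{K_m=k\}\mid \mathbf{n}\right).
\end{equation*}
Next, using $W_{l,m}^{(n)}=\sum_{i=1}^{K_m}\mathbf{1}\{S_i=l\}$ together with the exchangeability of $(S_1,\dots,S_{K_m}\mid K_m=k)$, each of the $k$ summands contributes the same mass, so
\begin{equation*}
\mathbb{E}\!\left(W_{l,m}^{(n)}\mathbf{1}\{K_m=k\}\mid \mathbf{n}\right)=k\,\mathbb{P}(S_1=l,\,K_m=k\mid \mathbf{n}).
\end{equation*}

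Then I would apply Proposition \ref{marg_new_cond} with $r=1$ and $s_1=l$, which yields
\begin{equation*}
\mathbb{P}(S_1=l,K_m=k\mid \mathbf{n})=\binom{m}{l}(1-\alpha)_{l-1}\,\frac{1}{k}\,\frac{V_{n+m,j+k}}{V_{n,j}}\,S_{m-l,\,k-1}^{-1,-\alpha,-(n-j\alpha)}.
\end{equation*}
The factor $k$ from exchangeability cancels the $1/k$, and the crucial cancellation occurs when this is substituted back: the $V_{n+m,j+k}$ in the numerator eliminates the $V_{n+m,j+k}$ in the denominator of the outer sum, producing $V_{n+m+1,j+k}/V_{n,j}$ and yielding exactly formula \eqref{newdisco}.

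There is no real obstacle once one recognizes that the ratio $V_{n+m+1,j+K_m}/V_{n+m,j+K_m}$ is $\sigma(K_m)$-measurable, so it can be pulled through the conditional expectation; the remaining task is a one-line application of the previously established marginal. The whole argument therefore illustrates the point already stressed in the earlier remarks of the paper, namely that working with marginals of conditional multivariate Gibbs distributions reduces these computations to a mechanical substitution.
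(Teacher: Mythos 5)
Your proposal is correct and follows essentially the same route as the paper's own proof: condition on $\{K_m=k\}$ so that the $\sigma(K_m)$-measurable ratio $V_{n+m+1,j+K_m}/V_{n+m,j+K_m}$ factors out, use exchangeability of $(S_1,\dots,S_{K_m}\mid K_m=k)$ to reduce $\mathbb{E}(W_{l,m}^{(n)}\mathbf{1}\{K_m=k\})$ to $k\,\mathbb{P}(S_1=l,K_m=k\mid\mathbf{n})$, and substitute the $r=1$ case of the marginal in Proposition~\ref{marg_new_cond}, after which the factor $k$ cancels $(k-1)!/k!$ and $V_{n+m,j+k}$ cancels in the ratio. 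The only cosmetic difference is that you leave the summation range as $k\geq 1$ rather than making the support $0\leq k-1\leq m-l$ explicit, but this is forced by the vanishing of $S_{m-l,k-1}^{-1,-\alpha,-(n-j\alpha)}$ outside that range.
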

\begin{proof}Let $f(K_m)= \frac{V_{n+m+1, j+K_m}}{V_{n+m, j+K_m}}$ then, by definition of $W_{l,m}^{(n)}$,
$$
\mathbb{E}_{(S_1,\dots, S_{K_m}, K_m| K_n=j)} \left( \frac{V_{n+m+1, j+K_m}}{V_{n+m, j +K_m}} (l- \alpha)W_{l, m}^{(n)}\right)= 
$$
$$
= (l- \alpha) \sum_{k=1}^ {m-l+1} \mathbb{E}_{(S_1,\dots, S_{K_m}| K_m=k, K_n=j)} \left(f(k)  \sum_{i=1}^{k} 1 \{S_{i}=l|K_n=j\}\right) \times$$
$$
\times\mathbb{P}(K_m=k| K_n=j)=
$$
\begin{equation}
\label{quasi}
= (l- \alpha) \sum_{k-1=0}^ {m-l} f(k)  k \mathbb{P} (S_i=l| K_m=k, K_n=j) \mathbb{P}(K_m=k|K_n=j).
\end{equation}
Now, specializing (\ref{margnew}),
$$
\mathbb{P}(S_1=l, \dots, S_r=l, K_m=k|K_n=j)=
$$
$$= \frac{m!}{m- rl!} \frac{(k-r)!}{(l!)^r} \frac{[(1-\alpha)_{l-1}]^r}{k!} \frac{V_{n+m, j+k}}{V_{n,j}} S_{m-rl, k-r}^{-1, -\alpha, -(n-j\alpha)}
$$
and inserting the marginal for $r=1$ in (\ref{quasi}), the result follows.
\end{proof}
By analogous approach we provide a straightforward derivation for the Bayesian nonparametric estimator for the probability to observe a species represented $l$ times among  the {\it old} species, namely
$$
P_{old, l}^{m+n+1}(\alpha, V)=\frac{V_{n+m+1, j+K_m}}{V_{n+m, j+K_m}}(l-\alpha) O_{l,m}^{(n)}.
$$
\begin{prop}
Under a general $(\alpha, V)$ Gibbs partition model, for $O_{l,m}^{(n)}= \sum_{i=1}^{j} 1\{n_i+M_{i,m}=l|n_1, \dots, n_j\}$ then a Bayesian nonparametric estimator under quadratic loss function of $P_{old, l}^{m+n+1}(\alpha, V_{n,j})$ is given by
\begin{equation}
\label{discoold}
\mathbb{E}_{(M_{1,m}, \dots, M_{j,m}, K_m|n_1, \dots, n_j)} \left( \frac{V_{n+m+1, j+K_m}}{V_{n,m, j+K_m}}(l-\alpha) O_{l,m}^{(n)}\right)=
\end{equation}
$$
=(l-\alpha) \sum_{\xi=1}^{l} m_\xi {m \choose {l-\xi}} (\xi-\alpha)_{l-\xi} \sum_{k=0}^{m-l+\xi} \frac{V_{n+m+1, j+k}}{V_{n,j}} S_{m-l+\xi, k}^{-1, -\alpha, -(n-j\alpha + \xi -\alpha)}
$$
\end{prop}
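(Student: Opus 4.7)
The plan is to adapt the argument of the preceding proposition to the old-block setting, exploiting the joint conditional law of $(M_{i,m}, K_m)$ given the basic sample $(n_1, \dots, n_j)$. Setting $f(K_m) = V_{n+m+1,\, j+K_m}/V_{n+m,\, j+K_m}$ and using linearity of expectation in the defining sum $O_{l,m}^{(n)} = \sum_{i:\, n_i \leq l} 1\{M_{i,m} = l - n_i\}$, the quantity of interest decomposes as
$$
(l-\alpha) \sum_{i:\, n_i \leq l} \mathbb{E}\!\left[f(K_m)\, 1\{M_{i,m} = l - n_i\} \mid \mathbf{n}\right].
$$
The only structural difference from the new-block case is that the $j$ old blocks are not exchangeable (each carries a fixed initial size $n_i$), so in place of reducing to $k$ copies of a single probability, one must simply sum over the $j$ distinct terms.

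The next step is to extract the joint law $\mathbb{P}(M_{i,m} = l - n_i,\, K_m = k \mid \mathbf{n})$, which is not directly available from (\ref{margvecchi}) or (\ref{onemargold}) since those formulas have already marginalized $K_m$. I would revisit the proof of Proposition 7 and stop one step short of collapsing the sum over $k$: starting from (\ref{jointold}), marginalizing the $M_{i',m}$ for $i' \neq i$ and the total $S_m$, and applying the convolution identity (\ref{convo_1}) term by term in $k$, yields
$$
\mathbb{P}(M_{i,m}=l-n_i,\, K_m=k \mid \mathbf{n}) = \binom{m}{l-n_i}(n_i-\alpha)_{l-n_i}\, \frac{V_{n+m,\, j+k}}{V_{n,j}}\, S_{m-l+n_i,\,k}^{-1,-\alpha,\,-(n-(j-1)\alpha - n_i)}.
$$
Multiplying by $f(k)$ cancels the $V_{n+m,\,j+k}$ factor and promotes it to $V_{n+m+1,\,j+k}$, producing precisely the inner sum over $k$ that appears on the right-hand side of (\ref{discoold}).

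Finally, I would regroup the outer sum $\sum_{i:\, n_i \leq l}$ according to the common value $\xi = n_i$. Since every initial block of size $\xi$ contributes an identical summand, the sum collapses to $\sum_{\xi=1}^{l} c_{\xi,n}$, where $c_{\xi,n} = \sum_{i=1}^j 1\{n_i = \xi\}$ is the number of blocks of size $\xi$ in the basic sample (this is the coefficient denoted $m_\xi$ in the statement). Combining with the prefactor $(l-\alpha)$ gives the claimed closed form.

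The main obstacle is the bookkeeping at the first step: one must resist the temptation to plug in the already-marginalized expression (\ref{onemargold}), because the factor $f(K_m)$ needs to be weighted term-by-term in $k$ before the Stirling-number sum is executed. Once the joint law of $(M_{i,m}, K_m)$ is correctly isolated, the remainder of the argument is a direct parallel to the proof of (\ref{newdisco}), and no further delicate combinatorics are required.
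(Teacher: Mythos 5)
Your proposal is correct and follows essentially the same route as the paper's proof: decompose $O_{l,m}^{(n)}$ by linearity, condition on $K_m=k$, identify the joint law $\mathbb{P}(M_{i,m}=l-n_i,\,K_m=k\mid \mathbf{n})$ as the term-by-term (not yet $k$-marginalized) version of \eqref{onemargold}, and let the factor $f(k)$ promote $V_{n+m,\,j+k}$ to $V_{n+m+1,\,j+k}$ inside the sum. Your explicit caution against substituting the already-marginalized \eqref{onemargold}, and your final regrouping of $\sum_{i:\,n_i\leq l}$ into $\sum_{\xi=1}^{l} m_\xi$, merely make precise two steps the paper leaves implicit.
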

\begin{proof}
Let $f(K_m)=\frac{V_{n+m+1, j+K_m}}{V_{n,m, j+K_m}}$, then
$$
\mathbb{E}_{(M_{1,m}, \dots, M_{j,m}, K_m|n_1, \dots, n_j)} \left( \frac{V_{n+m+1, j+K_m}}{V_{n,m, j+K_m}}(l-\alpha) O_{l,m}^{(n)}\right)=
$$
$$
= (l-\alpha) \sum_{k=0}^{m}f(k) \times
$$
$$\times \mathbb{E}_{(M_{1,m}, \dots, M_{j,m}| K_m=k, n_1,\dots, n_j)} \left(\sum_{i=1}^{j} 1 \left\{n_i+M_{i,m}=l | K_m=k, n_1, \dots, n_j\right\} \right)\times
$$
$$
\times \mathbb{P}(K_m=k|K_n=j),
$$
which is equal to
$$
=(l-\alpha) \sum_{k=0}^m f(k) \sum_{i:n_i \leq l} \mathbb{P} (M_{i,m}= l-n_i, K_m=k |n_1,\dots, n_j) 
$$
and by (\ref{onemargold})
$$
=(l-\alpha) \sum_{i:n_i \leq l}   {m \choose {l-n_i}} (n_i-\alpha)_{l-n_i} \sum_{k=0}^{m-l+n_i} \frac{V_{n+m+1, j+k}}{V_{n,j}} S_{m-l+n_i, k}^{-1, -\alpha, -(n-j\alpha -n_i +\alpha)}
$$
and the result follows.
\end{proof}
%\begin{rema}
%Combining \eqref{discoold} and \eqref{newdisco}
%\end{rema}

%\begin{rema}
%The previous result confirm that working with the marginals of conditional Gibbs distributions has introduced in this paper, instead of conditional Gibbs partitions as in Favaro et al. (2012a, 2012b) significantly simplifies the derivation of relevant results in Bayesian nonparametrics for species sampling problems, basically reducing very long proofs to few elegant and simple steps.  
%\end{rema}

\appendix
\section{}
This Appendix contains some basic facts on rising and falling factorial numbers, partitions and compositions of the natural integers, together with known results and definitions of generalized {\it central} and {\it non central} Stirling numbers that are exploited in the proofs and derivations all over the paper. The main reference is \cite{pit06}. Additionally, to facilitate the reading of the results contained in \cite{lmp07, lpw08, flp12a} and \cite{flp12b}, the relationship between central and non central generalized {\it factorial} coefficients and generalized {\it Stirling} numbers is reported. \\

\subsection{Generalized rising factorials} For $n=0,1,2,\dots,$ and arbitrary real $x$ and $h$,  $(x)_{n\uparrow h}$ denotes the $n$th factorial power of $x$ with increment $h$ (also called generalized {\it rising} factorial)
\begin{equation}
\label{factdef}
(x)_{n \uparrow h}:= x(x+h)\cdots(x+(n-1)h)=\prod_{i=0}^{n-1}(x+ih)=h^n(x/h)_{n},
\end{equation}
where $(x)_{n}$ stands for $(x)_{n\uparrow 1}$, and $(x)_{h\uparrow 0}=x^h$, for which the following multiplicative law holds 
\begin{equation}
\label{multiplicative}
(x)_{n+r \uparrow h}=(x)_{n\uparrow h} (x +n h)_{r \uparrow h}.
\end{equation}
From e.g. \cite{nor04} (cfr. eq. 2.41 and 2.45) a binomial formula also holds, namely
\begin{equation}
\label{bino}
(x+y)_{n \uparrow h}=\sum_{k=0}^n {n \choose k} (x)_{k \uparrow h} (y)_{n-k \uparrow h},
\end{equation}
as well as a generalized version of the multinomial theorem, i.e.
\begin{equation}
\label{multi}
(\sum_{j=1}^p z_j)_{n \uparrow h}= \sum_{n_j \geq 0, \sum n_j=n} \frac{n!}{n_1!\cdots n_p!} \prod_{j=1}^p (z_j)_{n_j \uparrow h}.
\end{equation}
%Notice that (\ref{multiplicative}) and (\ref{multi}) make  unnecessary  In fact,
For $m_j>0$, for every $j$, and $\sum_j m_j=m$, an application of (\ref{multiplicative}) yields
\begin{equation}
\label{miaaa}
(z_j)_{n_j +m_j -1}=(z_j)_{m_j-1} (z_j +m_j -1)_{n_j}
\end{equation}
and by (\ref{multi})
$$
\sum_{n_j \geq 0, \sum n_j=n} \frac{n!}{n_1!\cdots n_p!} \prod_{j=1}^p (z_j)_{n_j +m_j -1}=\prod_{j=1}^p (z_j)_{m_j -1 }(\sum_{j=1}^p (z_j +m_j -1))_{n}=
$$
$$
=\prod_{j=1}^p (z_j)_{m_j -1}(m +\sum_{j=1}^p z_j -p)_{n}, 
$$
which makes unnecessary the proof of Lemma 1 in \cite{lpw08}.
\subsection{Partitions and compositions} A {\it partition} of the finite set $[n]=(1,\dots, n)$ into $k$ blocks is an {\it unordered} collection of non-empty disjoint sets $\{A_1,\dots, A_k\}$ whose union is $[n]$, where  the blocks $A_i$ are assumed to be listed in order of appearance, i.e. in the order of their least elements. The sequence $(|A_1|,\dots, |A_k|)$ of the sizes of blocks, $(n_1, \dots, n_k)$, defines a {\it composition} of $n$, i.e. a sequence of positive integers with sum $n$ and  $\mathcal{P}_{[n]}^k$ denotes the space of all partitions of $[n]$ with $k$ blocks. From \cite{pit06} (cfr. eq. (1.9)) the number of ways to partition $[n]$ into $k$ blocks and assign each block a $W$ combinatorial structure such that the number of $W$-structures on a set of $j$ elements is $w_j$, in terms of sum over {\it compositions} of $n$ into $k$ parts is given by
\begin{equation}
\label{bell}
B_{n,k}(w_\bullet)=\frac{n!}{k!} \sum_{(n_1,\dots, n_k)} \prod_{i=1}^k \frac{w_{n_i}}{n_i!},
\end{equation}
where $B_{n, k}(w_\bullet)$ is a polynomial in variables $w_1, \dots, w_{n-k+1}$ known as the $(n,k)$th {\it partial Bell polynomial}. 
%he relevant case for our discussion is $w_i=(1- \alpha)$. 
\subsection{Generalized Stirling numbers} (For a comprehensive treatment see \cite{hsushi98}, see also \cite{pit06} Ex. 1.2.7). For arbitrary distinct reals $\eta$ and $\beta$, these are the connection coefficients $S_{n,k}^{\eta, \beta}$ defined by
\begin{equation}
\label{connection}
(x)_{n \downarrow \eta}= \sum_{k=0}^n S_{n,k}^{\eta, \beta} (x)_{k \downarrow \beta} 
\end{equation}
%or equivalently (controllare...)
%\begin{equation}
%(x)_{n \uparrow -\eta}= \sum_{k=0}^n S_{n,k}^{-\eta, -\beta} (x)_{k \uparrow -\beta},
%\end{equation}
and correspond to
$$
S_{n,k}^{\eta, \beta}=B_{n,k}((\beta -\eta)_{\bullet -1 \downarrow \eta}),
$$
where $(x)_{n \downarrow h}$ are generalized {\it falling} factorials and $(x)_{n \downarrow -h}=(x)_{n \uparrow h}$, while $(x)_{n\downarrow 1}=(x)_{[n]}$. Hence for $\eta=-1$, $\beta=-\alpha$, and $\alpha \in (-\infty, 1)$, $S_{n,k} ^{-1, -\alpha}$ is defined by
\begin{equation}
\label{unoalpha}
(x)_{n}=\sum_{k=0}^{n} S_{n,k} ^{-1, -\alpha} (x)_{k \uparrow \alpha},
\end{equation}
and for $w_{n_i}=(1 -\alpha)_{n_i-1}$ and $\alpha \in [0,1)$, equation (\ref{bell}) yields
\begin{equation}
\label{bellalpha}
B_{n,k}((1-\alpha)_{\bullet-1})=\sum_{\{A_1,\dots, A_k\}\in \mathcal{P}_{[n]}^k }\prod_{i=1}^k (1-\alpha)_{n_i-1}=
$$
$$=\frac{n!}{k!}\sum_{(n_1,\dots,n_k)}\prod_{i=1}^k \frac{(1-\alpha)_{n_i-1}}{n_i!}=S_{n,k}^{-1,-\alpha}.
\end{equation}
In \cite{lmp07, lpw08, flp12a, flp12b} the treatment is in term of {\it generalized factorial coefficients}, which are the connection coefficients  $\mathcal{C}^\alpha_{n,k}$ defined by
\begin{equation}
\label{chara}
(\alpha y)_{n}=\sum_{k=0}^{n} \mathcal{C}^\alpha_{n,k}(y)_{k},
\end{equation}
(cfr. \cite{cha05}). 
%neverthless the quantity $\mathcal{C}_{n,k}^\alpha/\alpha^k$ largely appears in their results and derivations. 
From (\ref{factdef}) and (\ref{unoalpha}), 
%  $(x)_{k \uparrow \alpha}=\alpha^k \left(\frac x\alpha\right )_{k \uparrow 1}$
if $x=y \alpha$ then
$$
(y \alpha)_{n}= \sum_{k=0}^{n} S_{n, k}^{-1, -\alpha}(y \alpha)_{k \uparrow \alpha}=\sum_{k=0}^n S_{n,k}^{-1, -\alpha} \alpha^k
(y)_{k},
$$
hence 
\begin{equation}
\label{coeff}
S_{n,k}^{-1, -\alpha}=\frac{\mathcal{C}_{n,k}^\alpha}{\alpha^k}.
\end{equation}
The representation (37) in \cite{lpw08}, (\cite{tos39}), also holds for generalized Stirling numbers with the obvious changes (cfr. e.g. \cite{pit06}, Eq. 3.19). Additionally, specializing formula (16) in \cite{hsushi98}, the following convolution relation holds, which defines {\it non-central} generalized Stirling numbers
\begin{equation}
\label{convo}
S_{n,k}^{-1, -\alpha, \gamma}= \sum_{s=k}^{n} {n \choose s} S_{s,k}^{-1, -\alpha} (-\gamma)_{n-s},
\end{equation}
and by (\ref{coeff}), 
\begin{equation}
\label{coeffsti}
\mathcal{C}_{n,k}^{\alpha, \gamma}=\alpha^k S_{n,k}^{-1, -\alpha, \gamma}= \sum_{s=k}^{n} {n \choose s} \mathcal{C}_{s,k}^{\alpha} (-\gamma)_{n-s}.
\end{equation} 
Hence the following variation of equation (38) in \cite{lpw08} defines {\it non-central} generalized Stirling numbers as the connection coefficients $S_{n, k}^{-1, -\alpha, \gamma}$ such that
\begin{equation}
\label{noncentralsti}
(y \alpha-\gamma)_{n}= \sum_{k=0}^{n} S_{n, k}^{-1, -\alpha, \gamma} \alpha^k
(y)_{k}=\sum_{k=0}^{n} S_{n, k}^{-1, -\alpha, \gamma}
(y\alpha)_{k \uparrow \alpha} .
\end{equation}

\subsection{Factorial moments and discrete distributions} (Cfr. e.g. \cite{johkot05}).  Falling factorial moments of a discrete r.v. $X$ provide the distribution function by the relationship 
\begin{equation}
\label{momprob}
\mathbb{P}(X=x)= \sum_{r \geq 0} \frac{(-1)^r}{x! r!} \mathbb{E}[(X)_{[x+r]}]
\end{equation}
and standard moments by the definition as connection coefficients of the Stirling numbers of the second kind $S_{r,j}^{0,1}$, (cfr.  Eq. (\ref{connection}))
\begin{equation}
\label{momfact}
\mathbb{E}[(X)^r] = \sum_{j=0}^r S_{r,j}^{0,1} \mathbb{E}[(X)_{[r]}].
\end{equation}

%\section*{Acknowledgements}

\end{document}